\title{$\pi\nos$-spaces and their open images\footnotetext{The work was performed as part of research conducted in the Ural Mathematical Center with the financial support
of the Ministry of Science and Higher Education of the Russian Federation (Agreement number 075-02-2022-874).}}
\author{Vlad Smolin}
\author{Mikhail Patrakeev\footnote{Krasovskii Institute of Mathematics and Mechanics of UB RAS, 620108, 16 Sofia Kovalevskaya street, Yekaterinburg, Russia; \textit{e-mail address}\textup{:} patrakeev@mail.ru}\;,
Vlad Smolin\footnote{Krasovskii Institute of Mathematics and Mechanics of UB RAS, 620108, 16 Sofia Kovalevskaya street, Yekaterinburg, Russia; \textit{e-mail address}\textup{:} SVRusl@yandex.ru}}
\theoremstyle{plain}
\newtheorem{teo}{Theorem}
\newtheorem{lemm}[teo]{Lemma}
\newtheorem{corr}[teo]{Corollary}
\newtheorem{prop}[teo]{Proposition}
\theoremstyle{definition}
\newtheorem{deff}[teo]{Definition}
\newtheorem{nota}[teo]{Notation}
\newtheorem{rema}[teo]{Remark}
\newtheorem{ques}[teo]{Question}
\newcommand{\nos}{\mathsurround=0pt}
\renewcommand{\ll}{\langle}
\newcommand{\rr}{\rangle}
\renewcommand{\leq}{\leqslant}
\renewcommand{\geq}{\geqslant}
\newcommand{\bset}{{}^\omega\hspace{-1pt}\omega}
\newcommand{\bspace}{\omega^\omega}
\newcommand{\btree}{{}^{{<}\omega}\hspace{-1pt}\omega}
\renewcommand{\int}[2]{\mathsf{Int}_{#2}(#1)}
\newcommand{\fruit}[2]{\mathsf{fruit}_{#2}(#1)}
\newcommand{\uph}{{\upharpoonright}\hspace{0.5pt}}
\DeclareMathOperator{\lh}{\mathsf{length}}
\begin{document}

\maketitle

\begin{abstract}
We study spaces that can be mapped onto the Baire space (i.e. the countable power of the countable discrete space) by a continuous quasi-open bijection. 
We give a characterization of such spaces in terms of Souslin schemes and call these spaces $\pi\nos$\emph{-spaces}. 
We show that every space that has a Lusin $\pi\nos$-base is a $\pi\nos$-space and that every second-countable $\pi\nos$-space has a Lusin $\pi\nos$-base. The main result of this paper is a characterization of continuous open images of $\pi\nos$-space.
\end{abstract}

\section{Introduction}

We give a characterization of spaces that can be mapped onto the Baire space by a continuous quasi-open bijection, see Proposition~\ref{another_desc} and Definition~\ref{def_of_pi_sp}. We call such spaces $\pi\nos$\emph{-spaces}. The class of $\pi\nos$\emph{-spaces} is countably productive and the complement of a $\sigma\nos$-compact subset of a $\pi\nos$-space is also a $\pi\nos$-space, see Corollaries~\ref{count_prod} and~\ref{sigma_comp}. 

The class of $\pi\nos$-spaces is similar to the class of spaces with a Lusin $\pi\nos$-base~\cite{patrakeev2015metrizable}:
Every space with a Lusin $\pi\nos$-base is a $\pi\nos$-space and every second-countable $\pi\nos$-space has a Lusin $\pi\nos$-base, see  Remark~\ref{pi.base} and  Theorem~\ref{when_pi_has_lpb}.  
The Baire space and the Sorgenfrey line (and its finite and countable powers) have a Lusin $\pi\nos$-base~\cite{patrakeev2015metrizable,patrakeev2019property}, so these spaces are $\pi\nos$-spaces. 

Every space that has a Lusin $\pi$-base can be mapped onto every nonemty Polish space by a continuous open mapping~\cite{patrakeev2015metrizable}. In Theorem~\ref{example} we show that there exists a $\pi\nos$-space without this property.

In Theorem~\ref{bla_main_theorem} we give a characterization of continuous open images of $\pi\nos$-spaces: For a nonempty topological space ${X}$, the following are equivalent:
\begin{itemize}
  \item[\ding{226}\,] ${X}$ is a continuous open image of a $\pi$-space;
  \item[\ding{226}\,] ${X}$ is a continuous open image of a space that can be mapped onto a Polish space by a continuous quasi-open bijection;
  \item[\ding{226}\,] ${X}$ is a Choquet space of countable $\pi$-weight and of cardinality not greater than continuum.
\end{itemize}

The above theorem implies a characterization of the class of Hausdorff compact continuous open images of $\pi\nos$-spaces: it is the class of Hausdorff compact spaces of countable $\pi$-weight and of cardinality not greater than continuum, see Corollary~\ref{corr.1}. 
And also it implies a characterization of the class of second-countable continuous open images of $\pi\nos$-spaces: it is the class of second-countable Choquet spaces of cardinality not greater than continuum, see Corollary~\ref{corr.2}.

The class of second-countable continuous open images of $\pi\nos$-spaces coincides with the class of second-countable continuous open images of  spaces with a Lusin $\pi\nos$-base, see Corollary~\ref{corr.2}. The question whether the class of all continuous open images of $\pi\nos$-spaces coincides with the class of all continuous open images of spaces with a Lusin $\pi\nos$-base remains open, see Question~\ref{quest}.

\section{Notation and terminology}

We use terminology from~\cite{topenc} and~\cite{kunen2014set}. A \emph{space} is a topological space. We also use the following notation.

\begin{nota}\label{not01}%
  The symbol $\coloneq$ means ``equals by definition''\textup{;}
  the symbol ${\colon}{\longleftrightarrow}$ is used to show that the expression on the left side is an abbreviation for the expression on the right side\textup{;}
  \begin{itemize}
  \item [\ding{46}\,]
 $\mathsurround=0pt
 \omega$ $\coloneq$ the set of finite ordinals $=$ the set of natural numbers, so $0=\varnothing\in\omega$ and ${n}=\{0,\ldots,{n}-1\}$ for all ${n}\in\omega;$
  \item [\ding{46}\,]
 $\mathsurround=0pt
 {s}\,$ is a \textit{sequence}$\ {\colon}{\longleftrightarrow}\ 
 {s}$ is a function whose domain is a finite ordinal or is $\omega$;
\item [\ding{46}\,]
 if ${s}$ is sequence, then

 $\mathsurround=0pt
 \lh({s})\:\coloneq$ the domain of ${s}$;
 \item [\ding{46}\,]
   $\mathsurround=0pt
   \langle{s}_0,\ldots,{s}_{{n}-1}\rangle$ $\coloneq$
   the sequence ${s}$ such that  $\lh({s})={n}\in\omega$ and  ${s}({i})={s}_{i}$ for all ${i}\in{n};$
 \item [\ding{46}\,]
   $\mathsurround=0pt
   \langle\rangle$ $\coloneq$ the sequence of length {0};
 \item [\ding{46}\,]
   if ${s}=\langle{s}_0,\ldots,{s}_{{n}-1}\rangle$, then

   $\mathsurround=0pt
   {s}\hspace{0.5pt}^{\frown}{x}
   \:\coloneq\:\langle{s}_0,\ldots,{s}_{{n}-1},{x}\rangle$;
 \item [\ding{46}\,]
   if ${s}=\langle{s}_0,\ldots,{s}_{{n}-1}\rangle$ and  ${t}=\langle{t}_0,\ldots,{t}_{{m}-1}\rangle$, then

   $\mathsurround=0pt
   {s}\:{\hat{}}\:{t}
   \:\coloneq\:\langle{s}_0,\ldots,{s}_{{n}-1},{t}_0, \ldots, {t}_{{m}-1}\rangle$;
\item [\ding{46}\,]
 $\mathsurround=0pt
 {f}\uph{A}$ $\coloneq$ the restriction of the function ${f}$ to the set ${A};$
 
 \item [\ding{46}\,] ${g}\circ {f}$ is the composition of functions ${g}$ and ${f}$ (that is, ${g}$ after ${f}$);
  \item [\ding{46}\,]
 $\mathsurround=0pt
 {A}\subset{B}
 \ {\colon}{\longleftrightarrow}\ 
 {A}\subseteq{B}\enskip\text{and}\enskip{A}\neq{B};$
  \item [\ding{46}\,]
 if ${s}$ and ${t}$ are sequences, then

 $\mathsurround=0pt
 {s}\sqsubseteq{t}
 \ {\colon}{\longleftrightarrow}\ 
 {s}={t}\uph\lh({s})\quad$ 

 (actually, ${s}\sqsubseteq{t}\leftrightarrow{s}\subseteq{t}$);
  \item [\ding{46}\,]
 $\mathsurround=0pt
 {}^{B}\!{A}$ $\coloneq$ the set of functions from ${B}$ to ${A};$

 in particular, ${}^{0}\hspace{-1pt}{A}=\big\{\langle\rangle\big\};$
\item [\ding{46}\,]
 $\mathsurround=0pt
 {}^{{<}\omega}\hspace{-1pt}{A}
 \:\coloneq\:\bigcup_{{n}\in\omega}{}^{{n}}\hspace{-1pt}{A}\,=\,$
 the set of finite sequences in ${A};$
\item [\ding{46}\,]
 if ${p}$ is a point in a space with topology $\tau$, then

 $\mathsurround=0pt
 \tau({p})$ $\coloneq$ $\{{U}\in\tau:{p}\in{U}\}$ $=$
 the set of open neighbourhoods of ${p}$;
  \item [\ding{46}\,]
 $\mathsurround=0pt
 \gamma\,$ is a $\pi\mathsurround=0pt$-\textit{net} for a space ${X}
 \ {\colon}{\longleftrightarrow}\ $ all elements of $\gamma$ are nonempty and for each nonempty open ${U}\subseteq{X}$, there is ${G}\in\gamma$ such that ${G}\subseteq{U}$;
  \item [\ding{46}\,]
 $\mathsurround=0pt
 \gamma\,$ is a $\pi\mathsurround=0pt$-\textit{base} for a space ${X}
 \ {\colon}{\longleftrightarrow}\ \gamma$ is a $\pi\mathsurround=0pt$-net for ${X}$ and all elements of $\gamma$ are open;
  \item [\ding{46}\,]
  $\mathsurround=0pt
  \tau_{\omega^\omega}$ $\coloneq$  the Tychonoff product topology on the set ${}^{\omega\hspace{-1pt}}\omega$, where $\omega$ carries the discrete topology;
  \item [\ding{46}\,]
  $\mathsurround=0pt
  {\omega^{\omega}}$ $\coloneq$ the \emph{Baire space} $=$ the space $\langle{}^{\omega\hspace{-1pt}}\omega,\tau_{\omega^\omega}\rangle$.
  \end{itemize}
\end{nota}

Recall that, in \cite{kechris2012classical}, a \emph{Souslin scheme}  is an indexed family $\langle V_a \rangle_{a \in \btree}$ of sets. 

\begin{deff}
 Let ${\bf V} = \langle V_a \rangle_{a \in \btree}$ be a Souslin scheme, $\ll{X},\tau\rr$ be a space, and $p \in \bset$. Then

 \begin{itemize}
 \item [\ding{46}\,]  ${\bf V}$  {\it covers} ${X}\ {\colon}{\longleftrightarrow}\  V_{\langle \rangle} = X$ and $V_a = \bigcup_{n \in \omega} V_{a\hspace{0.5pt}^{\frown}n}$ for all $a \in \btree$;
 \item [\ding{46}\,]  ${\bf V}$  {\it partitions} ${X}\ {\colon}{\longleftrightarrow}\  {V}$ covers ${X}$ and  $V_{a\hspace{0.5pt}^{\frown}{n}} \cap V_{a\hspace{0.5pt}^{\frown}{m}} = \emptyset$ for all ${a} \in \btree$ and ${n} \neq {m} \in \omega$; 
 \item [\ding{46}\,] the {\it fruit} of ${p}$ in ${\bf {V}}$, $\fruit{p}{{\bf V}}$, is the set $\bigcap_{n \in \omega}V_{p \upharpoonright n}$;
 \item [\ding{46}\,]  ${\bf V}$ is {\it complete} $\ {\colon}{\longleftrightarrow}\ $  $\fruit{q}{{\bf V}} \neq \emptyset$ for all $q \in \bset$;
 \item [\ding{46}\,]  ${\bf V}$ has {\it strict branches} $\ {\colon}{\longleftrightarrow}\ |\fruit{q}{{\bf V}}|=1$ for all $q \in \bset$;
 \item[\ding{46}\,] ${\bf {V}}$ is {\it open} on $\ll{X},\tau\rr\ {\colon}{\longleftrightarrow}\ {V}_{a}\in\tau$  for all ${a} \in \btree$.
 \end{itemize}
\end{deff}

\begin{deff}\label{def.luz.p.base}
 A \textit{Lusin $\hspace{1pt}\mathsurround=0pt\pi$-base} for a space $\ll{X},\tau\rr$ is an open Souslin scheme $\langle V_a \rangle_{a \in \btree}$ on $X$ that partitions ${X}$, has strict branches, and such that
  \begin{itemize}
 \item[\ding{46}\,]
 $\mathsurround=0pt
   \forall{x}\in{X}\enskip\forall{U}\in\tau({x})$

   $\mathsurround=0pt
   \exists {a}\in\btree\enskip
   \exists{n}\in\omega\ $
   \begin{itemize}
   \item[\ding{226}\,]
  $\mathsurround=0pt
  {x} \in {V}_{a}\enskip$ and
   \item[\ding{226}\,]
  $\mathsurround=0pt
  \bigcup_{{i}\geqslant{n}}{V}_{{a}\hspace{0.5pt}^{\frown}{i}}\subseteq {U}.$
   \end{itemize}
  \end{itemize}
\end{deff}

\begin{nota}\label{not.S.N}\mbox{\ }

  \begin{itemize}
  \item [\ding{46}\,]
  $\mathsurround=0pt
  \mathbf{S}$ $\coloneq$ the \textit{standard Lusin scheme} $\coloneq$ the Souslin scheme $\langle {S}_{a}\rangle_{{a}\in{}^{{<}\omega\hspace{-1pt}}\omega}$ such that

  $\mathsurround=0pt
  {S}_{a}=\{{p}\in{}^{\omega\hspace{-1pt}}\omega:{a}\sqsubseteq{p}\}$
  for all ${a}\in{}^{{<}\omega\hspace{-1pt}}\omega.$
  \end{itemize}
\end{nota}

\begin{rema}\label{rem.baire.space}
  \begin{itemize}
  \item [(a)]
 The family $\{{S}_{a}:{a}\in{}^{{<}\omega\hspace{-1pt}}\omega\}$ is a base for the Baire space.
  \item [(b)]
 The standard Lusin scheme is a Lusin $\pi\mathsurround=0pt$-base for the Baire space.
  \end{itemize}
\end{rema}

\section{$\pi$-spaces}

\begin{deff} \label{def_of_pi_sp}
 A space ${X}$ is a $\pi$-\textit{space} iff there exists an open Souslin scheme $\langle V_a \rangle_{a \in \btree}$ on $X$ that partitions ${X}$, has strict branches, and such that the family $\{{V}_{a} : {a} \in \btree\}$ is a $\pi$-base for ${X}$.
\end{deff}

\begin{rema}\label{pi.base}
  If  $\langle V_a \rangle_{a \in \btree}$ is a Lusin $\pi\mathsurround=0pt$-base for a space ${X},$ then the family $\{{V}_{a}:{a}\in{}^{{<}\omega\hspace{-1pt}}\omega\}$ is a $\pi\mathsurround=0pt$-base for ${X}$.

  It follows that every space with a Lusin $\pi$-base is a $\pi$-space.\hfill$\qed$%
\end{rema}

Recall that a continuous map is \textit{quasi-open} iff the image of every nonempty open set has  nonempty interior.

\begin{lemm} \label{abt_qs_opn}
 For spaces $\ll {X}, \tau \rr$ and $\ll {Y}, \sigma \rr$ the following are equivalent:
 \begin{itemize}
  \item[\textup{(a)}] There exists a continuous quasi-open bijection ${f}\colon \ll {X}, \tau \rr \to \ll {Y}, \sigma \rr$.
  \item[\textup{(b)}] There exists a topology $\rho \supseteq \sigma$ on ${Y}$ such that
   \begin{itemize}
   \item[\ding{226}\,] $\sigma \setminus \{\varnothing\}$ is a $\pi$-base for $\ll Y, \rho \rr$ and 
   \item[\ding{226}\,] $\ll {Y}, \rho \rr$ is homeomorphic to $\ll {X}, \tau \rr$.  \end{itemize}
 \end{itemize}
\end{lemm}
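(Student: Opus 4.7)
The plan is to prove both implications directly by exhibiting (or transporting) the topology $\rho$ in the obvious way. The key observation is that a continuous quasi-open bijection is exactly a homeomorphism between $\langle X, \tau \rangle$ and some finer topology on $Y$, where the original topology $\sigma$ sits as a $\pi$-base.

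For (a)$\Rightarrow$(b), given a continuous quasi-open bijection $f \colon \langle X, \tau \rangle \to \langle Y, \sigma \rangle$, I would define $\rho \coloneq \{ f[U] : U \in \tau \}$. This is a topology on $Y$ because $f$ is a bijection, and $f \colon \langle X, \tau \rangle \to \langle Y, \rho \rangle$ is tautologically a homeomorphism. Continuity of $f$ into $\langle Y, \sigma \rangle$ gives $\sigma \subseteq \rho$. To check that $\sigma \setminus \{\varnothing\}$ is a $\pi$-base for $\langle Y, \rho \rangle$, take any nonempty $W \in \rho$; write $W = f[U]$ for some nonempty $U \in \tau$, and use quasi-openness of $f$ to find a nonempty $V \in \sigma$ with $V \subseteq f[U] = W$.

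For (b)$\Rightarrow$(a), let $h \colon \langle X, \tau \rangle \to \langle Y, \rho \rangle$ be a homeomorphism, and set $f \coloneq h$ viewed as a map into $\langle Y, \sigma \rangle$. Since $\sigma \subseteq \rho$, the map $f$ is a continuous bijection. To verify quasi-openness, take any nonempty $U \in \tau$; then $f[U] = h[U]$ is nonempty and open in $\rho$, so by the $\pi$-base hypothesis there is a nonempty $V \in \sigma$ contained in $f[U]$, which shows $\int{f[U]}{\sigma} \neq \varnothing$.

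There is no real obstacle here: both directions are essentially a bookkeeping exercise around the definition of quasi-openness, and the main thing to keep straight is the distinction between $\sigma$ and $\rho$ when checking which family forms a $\pi$-base for which topology.
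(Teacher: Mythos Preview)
Your proof is correct and follows exactly the paper's approach: defining $\rho \coloneq \{f[U] : U \in \tau\}$ for (a)$\Rightarrow$(b) and reusing the given homeomorphism as the quasi-open bijection for (b)$\Rightarrow$(a). You have simply spelled out the verifications that the paper leaves as ``not hard to see.''
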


\begin{proof}
 (a) $\Rightarrow$ (b). It is not hard to see that the topology $\rho := \{{f}[{U}] : {U} \in \tau\}$ satisfies all requirements.
 
 (b) $\Rightarrow$ (a). Take a homeomorphism ${f}\colon \ll {X}, \tau \rr \to \ll {Y}, \rho \rr$. Then  ${f}\colon \ll {X}, \tau \rr \to \ll {Y}, \sigma \rr$ is a continuous quasi-open bijection.
\end{proof}

\begin{prop} \label{another_desc}
 For a space ${X}$ the following are equivalent\textup{:}
 \begin{itemize}
  \item[\textup{(a)}] ${X}$ is a $\pi$-space.
  \item[\textup{(b)}] There exists a continuous quasi-open bijection ${f}\colon{X}\to\bspace$.
  \item[\textup{(c)}] There exists a topology $\rho\supseteq\tau_{\bspace}$ on $\bset$ such that
  \begin{itemize}
  \item[\ding{226}\,] $\tau_{\bspace} \setminus \{\varnothing\}$ is a $\pi$-base for $\ll \bset, \rho \rr$ and
  \item[\ding{226}\,] $\ll \bset, \rho \rr$ is homeomorphic to ${X}$.  \end{itemize}
 \end{itemize}
\end{prop}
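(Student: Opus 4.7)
The plan is to derive (b)$\Leftrightarrow$(c) as a direct application of Lemma~\ref{abt_qs_opn} to the Baire space, and to prove (a)$\Leftrightarrow$(b) by converting between a partitioning open Souslin scheme with strict branches on $X$ and a bijection $X \to \bspace$, using the standard Lusin scheme $\mathbf{S}$ as the bridge. For (b)$\Leftrightarrow$(c) I would simply specialize Lemma~\ref{abt_qs_opn} to $\ll Y,\sigma\rr = \bspace$: by Remark~\ref{rem.baire.space}(a) the family $\{S_a : a \in \btree\}$ is a base for $\bspace$, and with this choice of $Y$ condition~(c) is literally condition~(b) of the lemma, so no further argument is needed.

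For \textbf{(a)$\Rightarrow$(b)}, given a witnessing scheme $\mathbf{V} = \langle V_a \rangle_{a \in \btree}$ I would define $f\colon X \to \bspace$ by sending each $x \in X$ to the unique $p \in \bset$ such that $x \in V_{p \uph n}$ for all $n$; existence and uniqueness of such $p$ follow because $\mathbf{V}$ partitions $X$. By construction $f^{-1}(S_a) = V_a$ for every $a \in \btree$. The strict branches condition makes $f$ injective (distinct preimages of $p$ would both lie in the singleton $\fruit{p}{\mathbf{V}}$) and surjective (the nonempty fruit of any $q$ provides a preimage). Continuity is immediate, since $\{S_a\}$ is a base of $\bspace$ and each $V_a$ is open. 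For quasi-openness, given a nonempty open $U \subseteq X$ the $\pi$-base property yields $a$ with $\emptyset \neq V_a \subseteq U$, and then $S_a = f[V_a] \subseteq f[U]$ shows that $f[U]$ has nonempty interior.

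For \textbf{(b)$\Rightarrow$(a)}, given a continuous quasi-open bijection $f\colon X \to \bspace$ I would set $V_a \coloneq f^{-1}(S_a)$. Continuity yields openness of each $V_a$, and bijectivity of $f$ transports the cover and partition identities from $\mathbf{S}$ to $\mathbf{V}$. Each fruit $\fruit{q}{\mathbf{V}}$ equals $f^{-1}(\{q\})$, which is a singleton, so $\mathbf{V}$ has strict branches. Finally, quasi-openness of $f$ delivers the $\pi$-base property: for any nonempty open $U \subseteq X$ the image $f[U]$ has nonempty interior and hence contains some nonempty basic $S_a$, so $V_a = f^{-1}(S_a) \subseteq f^{-1}(f[U]) = U$ by injectivity.

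I do not expect a real obstacle; the one point to stay careful about is the systematic use of bijectivity of $f$ to transport set-theoretic identities (partition, cover, singleton fruits, preimage-of-image) between $\bspace$ and $X$, which is precisely what couples the quasi-open-bijection data with the Souslin-scheme data.
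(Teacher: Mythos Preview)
Your proposal is correct and follows essentially the same approach as the paper: both invoke Lemma~\ref{abt_qs_opn} for (b)$\Leftrightarrow$(c), build the bijection $f$ from the unique branch through each point for (a)$\Rightarrow$(b), and pull back the standard Lusin scheme via $V_a\coloneq f^{-1}[S_a]$ for (b)$\Rightarrow$(a). Your write-up is slightly more explicit about why $f$ is injective and surjective and about the use of injectivity in the final $\pi$-base verification, but the underlying argument is identical.
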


\begin{proof}
 (b) $\Leftrightarrow$ (c) follows from Lemma \ref{abt_qs_opn}.
 
 (a) $\Rightarrow$ (b). Suppose that $\mathbf{V}=\ll {V}_{a} \rr_{{a} \in \btree}$ is an open Souslin scheme on $X$ that partitions ${X}$ and has strict branches, and such that the family $\{{V}_{a} : {a} \in \btree\}$ is a $\pi$-base for ${X}$. Then for every ${x}\in {X}$,  there is a unique branch ${f}({x}) \in \bset$  such that $\{{x}\} = \fruit{{f}({x})}{{\bf {V}}}$.
 This gives a mapping  ${f}\colon{X}\to\bspace$, and this mapping is bijective. 
 
 Note that ${f}[{{V}_{a}}] = {S}_{a}$ for all ${a}\in\btree$. 
 Then ${X}$ has a $\pi$-base $\{{V}_{a}:{a}\in\btree\}$, whose images $\{{S}_{a}:{a}\in\btree\}$ form a base for $\bspace$ (see Remark~\ref{rem.baire.space}(a)), so the mapping $f$ is quasi-open. 
 And also $\bspace$ has a base $\{{S}_{a}:{a}\in\btree\}$ whose  preimages $\{{V}_{a}:{a}\in\btree\}$ are open, so the mapping $f$ is continuous.
 
 (b) $\Rightarrow$ (a). Suppose that there exists a continuous quasi-open bijection ${f}\colon{X}\to\bspace$. Let   ${V}_{a} \coloneq {f}^{-1}[{S}_{a}]$ for all ${a} \in \btree$. Then $\ll {V}_{a} \rr_{{a} \in \btree}$ is an open Souslin scheme on ${X}$, it partitions ${X}$ and has strict branches, and the family $\{{V}_{a}:{a}\in\btree\}$ is a $\pi$-base for ${X}$.
\end{proof}

Since finite and countable powers of the Baire space are homeomorphic to the Baire space, equivalency of (a) and (c) in Proposition~\ref{another_desc} implies the following.

\begin{corr} \label{count_prod}
Finite and countable products of $\pi\nos$-spaces are also $\pi\nos$-spaces. \hfill$\qed$%
\end{corr}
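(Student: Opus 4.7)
The plan is to invoke the equivalence (a)$\Leftrightarrow$(c) in Proposition~\ref{another_desc}, exactly as the sentence preceding the corollary suggests. The finite case is a special instance of the countable case (take tail spaces to be a single point, or instead use $(\bspace)^{k}\cong\bspace$ directly), so I focus on countable products. Let $\ll X_n\rr_{n\in\omega}$ be a sequence of $\pi$-spaces. By condition (c), for each $n$ I would fix a topology $\rho_n\supseteq\tau_{\bspace}$ on $\bset$ such that $\tau_{\bspace}\setminus\{\varnothing\}$ is a $\pi$-base for $\ll\bset,\rho_n\rr$ and a homeomorphism $X_n\cong\ll\bset,\rho_n\rr$.

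Next I would form the Tychonoff product $\prod_{n\in\omega}\ll\bset,\rho_n\rr$, denoting its topology by $\rho$; this is homeomorphic to $\prod_n X_n$. The topology $\rho$ refines the product of the $\tau_{\bspace}$'s, whose underlying set is $(\bset)^\omega$. Using the standard homeomorphism $(\bspace)^\omega\cong\bspace$, I would identify $(\bset)^\omega$ with $\bset$ so that the product of the $\tau_{\bspace}$'s becomes $\tau_{\bspace}$ itself. Then $\rho$ is a topology on $\bset$ satisfying $\rho\supseteq\tau_{\bspace}$, and $\ll\bset,\rho\rr\cong\prod_n X_n$.

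The only substantive step is checking that $\tau_{\bspace}\setminus\{\varnothing\}$ is a $\pi$-base for $\ll\bset,\rho\rr$. A nonempty $U\in\rho$ contains some basic open box $\prod_n U_n$ with $U_n\in\rho_n$ and $U_n=\bset$ for all but finitely many $n$. On each of the finitely many indices with $U_n\ne\bset$, I would apply the $\pi$-base property of $\rho_n$ to choose a nonempty $W_n\in\tau_{\bspace}$ with $W_n\subseteq U_n$; on the remaining indices I set $W_n=\bset$. Then $\prod_n W_n$ is a nonempty basic open set in the product of the $\tau_{\bspace}$'s and is contained in $U$, which after identification gives a nonempty member of $\tau_{\bspace}$ sitting inside $U$. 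This confirms (c) for $\prod_n X_n$, so the product is a $\pi$-space.

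I do not expect any real obstacle: the argument is essentially bookkeeping, and the main thing to be careful about is threading the two identifications (each $X_n$ with $\ll\bset,\rho_n\rr$, and $(\bspace)^\omega$ with $\bspace$) so that the conclusion of (c) applies cleanly to the product.
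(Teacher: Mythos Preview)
Your argument is correct and follows exactly the route indicated by the paper: use condition~(c) of Proposition~\ref{another_desc} on each factor, take the product of the $\rho_n$'s, and invoke $(\bspace)^\omega\cong\bspace$ to transport everything back to a single copy of $\bset$; the $\pi$-base check for the product is precisely the standard box argument you wrote out. One small slip in your aside: a one-point space is \emph{not} a $\pi$-space (by Proposition~\ref{another_desc}(b) it would have to biject onto $\bspace$), so you cannot pad a finite product with singletons---but your alternative of using $(\bspace)^k\cong\bspace$ directly (or padding with copies of $\bspace$ itself) handles the finite case without trouble.
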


Since the complement of a $\sigma\nos$-compact subset in the Baire space is homeomorphic to the Baire space~\cite[Theorems 3.11 and 7.7]{kechris2012classical} and since a nonempty open set in the Baire space cannot be covered by a $\sigma\nos$-compact set~\cite[Theorem 7.7]{kechris2012classical}, equivalency of (a) and (c) in Proposition~\ref{another_desc} implies another property of $\pi\nos$-spaces.  

\begin{corr} \label{sigma_comp}
The complement of a $\sigma\nos$-compact subset of a $\pi\nos$-space is a $\pi\nos$-space. \hfill$\qed$%
\end{corr}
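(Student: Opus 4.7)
The plan is to verify condition (c) of Proposition~\ref{another_desc} for $X\setminus A$, where $X$ is a $\pi$-space and $A\subseteq X$ is $\sigma$-compact, using only the two Baire-space facts quoted just before the statement.

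First I would apply Proposition~\ref{another_desc}(c) to $X$ itself to obtain a topology $\rho\supseteq\tau_{\bspace}$ on $\bset$ for which $\tau_{\bspace}\setminus\{\varnothing\}$ is a $\pi$-base, together with a homeomorphism $h\colon X\to\ll\bset,\rho\rr$. Setting $B:=h[A]$, the set $B$ is $\sigma$-compact in $\ll\bset,\rho\rr$; because the identity map $\ll\bset,\rho\rr\to\bspace$ is continuous, $B$ is also $\sigma$-compact as a subspace of the Baire space.

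Next, by the first quoted fact I would fix a homeomorphism $g\colon\bspace\setminus B\to\bspace$ and define a topology on $\bset$ by transporting the subspace topology $\rho\uph(\bset\setminus B)$ along $g$, that is, $\rho^{*}:=\{g[V\cap(\bset\setminus B)]:V\in\rho\}$. By construction $\ll\bset,\rho^{*}\rr$ is homeomorphic to $X\setminus A$ (via $g\circ h$). To finish via Proposition~\ref{another_desc}(c) I need $\rho^{*}\supseteq\tau_{\bspace}$ and that $\tau_{\bspace}\setminus\{\varnothing\}$ is a $\pi$-base for $\rho^{*}$. The inclusion holds because $g$ is simultaneously a homeomorphism from $\bset\setminus B$ with the subspace Baire topology onto $\bspace$, and that subspace topology is contained in $\rho\uph(\bset\setminus B)$. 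For the $\pi$-base condition, take a nonempty $T\in\rho^{*}$; then $g^{-1}[T]=V\cap(\bset\setminus B)$ for some nonempty $V\in\rho$, the $\pi$-base property of $\rho$ yields a nonempty $W\in\tau_{\bspace}$ with $W\subseteq V$, the second quoted fact rules out $W\subseteq B$, and hence $g[W\cap(\bset\setminus B)]$ is a nonempty $\tau_{\bspace}$-open subset of $T$.

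The main bookkeeping concern will be keeping the two topologies on $\bset\setminus B$ separated---the restriction of $\rho$, used to identify the space with $X\setminus A$, and the subspace Baire topology, used via $g$ to recognise $\rho^{*}$ as refining $\tau_{\bspace}$. Once this distinction is written down carefully, the verification of (c) for $\rho^{*}$ amounts to assembling the two Baire-space ingredients in the right order.
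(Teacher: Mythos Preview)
Your proposal is correct and follows precisely the route the paper sketches in the sentence preceding the corollary: pass to the model $\langle\bset,\rho\rangle$ via Proposition~\ref{another_desc}(c), push the $\sigma$-compact set down to the Baire space, and then use the two quoted Baire-space facts to verify condition~(c) for the transported topology $\rho^{*}$. The only detail you might add explicitly is that $B\neq\bset$ (so $g$ exists), which is immediate since $\bspace$ is not $\sigma$-compact; otherwise your bookkeeping of the two topologies on $\bset\setminus B$ is exactly what the paper leaves to the reader.
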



\begin{teo} \label{when_pi_has_lpb}
 Every second-countable $\pi$-space has a Lusin $\pi$-base.
\end{teo}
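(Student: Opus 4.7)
The plan is to invoke Proposition~\ref{another_desc} to replace $X$ with $\ll\bset,\rho\rr$ for some topology $\rho\supseteq\tau_{\bspace}$ on $\bset$ such that $\tau_{\bspace}\setminus\{\varnothing\}$ is a $\pi\nos$-base for $\rho$; by second countability, fix a countable $\rho$-base $\{U_k\}_{k\in\omega}$. I will construct a Lusin $\pi\nos$-base $\mathbf{W}=\ll W_a\rr_{a\in\btree}$ together with an auxiliary map $\phi\colon\btree\to\btree$, maintaining that each $W_a$ is a nonempty $\rho$-open subset of $S_{\phi(a)}$, that $\{W_{a^\frown i}\}_{i\in\omega}$ partitions $W_a$ into nonempty $\rho$-open sets, and that $\phi(a)\sqsubseteq\phi(a^\frown i)$ for every $i$.

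The recursion is driven by a bookkeeping that assigns every node one of two tasks, each occurring cofinally along every branch: $(\text{L}_k)$ realize Lusin for $U_k$ at $a$; $(\text{P})$ force strict $\phi$-progress at all children. For $(\text{L}_k)$ when $W_a\cap U_k$ and $W_a\setminus U_k$ are both nonempty, I use that $\tau_{\bspace}\setminus\{\varnothing\}$ is a $\pi\nos$-base for $\rho$ to pick $c$ with $\phi(a)\sqsubseteq c$, $\lh(c)>\lh(\phi(a))$, and $S_c\subseteq W_a\cap U_k$, and set $W_{a^\frown 0}:=W_a\setminus S_c$, $W_{a^\frown i}:=S_{c^\frown(i-1)}$ for $i\geq 1$, together with $\phi(a^\frown 0):=\phi(a)$ and $\phi(a^\frown i):=c^\frown(i-1)$ for $i\geq 1$; this yields $\bigcup_{i\geq 1}W_{a^\frown i}=S_c\subseteq U_k$, realizing Lusin for $U_k$ at $a$ with threshold $1$. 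For $(\text{P})$, I partition $W_a$ into $\omega$ many pieces each contained in a deeper $S_{c_i}$: since $W_a$ contains some $S_b$ it is not $\tau_{\bspace}$-precompact, so the support tree $T(W_a):=\{c:W_a\cap S_c\neq\varnothing\}$ cannot be finitely branching and hence admits an antichain of $\omega$ nodes at some level strictly above $\lh(\phi(a))$; setting $W_{a^\frown i}:=W_a\cap S_{c_i}$ and $\phi(a^\frown i):=c_i$ gives strict $\phi$-progress.

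Verification then proceeds as follows. Partition and openness are built in. Strict branches hold because $(\text{P})$ tasks cofinal on each branch $p$ force $\lh(\phi(p\uph n))\to\infty$, so $q:=\bigcup_n\phi(p\uph n)\in\bspace$ is well-defined; an inductive argument shows $q\in W_{p\uph n}$ for every $n$, the key step being that once an $(\text{L}_k)$ node strips $S_c$, the support tree of every descendant of the $0$-child excludes $c$ and its extensions, so no later $\phi$-choice extends $c$ and thus $q\notin S_c$ keeps $q$ inside the ``outside'' children. The $\pi\nos$-base property is automatic: given any nonempty $\rho$-open $V$, pick $S_b\subseteq V$ and any $q\in S_b$, and then on the branch $p_q$ of $\mathbf{W}$ some $n$ has $\lh(\phi(p_q\uph n))\geq\lh(b)$, which forces $\phi(p_q\uph n)\sqsupseteq b$ and hence $W_{p_q\uph n}\subseteq S_b\subseteq V$. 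Lusin follows because for $x$ and $\rho$-open $U\ni x$ one picks $U_k\subseteq U$ with $x\in U_k$ and finds $a=p_x\uph n$ receiving an $(\text{L}_k)$ task, at which either $W_a\subseteq U_k$ (trivial) or the construction above applies. The main obstacle is precisely the strict-branches argument: the ``outside'' child $W_{a^\frown 0}=W_a\setminus S_c$ does not sit inside any deeper $S_{c''}$, so $\phi$-progress is lost at index $0$ of $(\text{L}_k)$ nodes, and a branch always entering the $0$-child could seemingly fail strict branches; the rescue combines the interleaved $(\text{P})$ tasks, which restore $\phi$-progress cofinally, with the observation that stripping $S_c$ permanently forbids future $\phi$-extensions of $c$, which is exactly what keeps the $\phi$-limit inside every $W_{p\uph n}$.
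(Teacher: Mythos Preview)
Your argument is correct, but it takes a harder road than the paper does. Both proofs begin identically---pass to $\langle\bset,\rho\rangle$ via Proposition~\ref{another_desc} and fix a countable $\rho$-base---and both interleave ``progress'' steps with ``Lusin for $U_k$'' steps. The essential difference is that the paper keeps every $V_a$ in $\tau_{\bspace}$, not merely in $\rho$: at even levels it decomposes the current $\tau_{\bspace}$-open set canonically into basic sets $S_d$ with $\lh(d)\geq\lh(a)+1$, so that at each odd level one literally has $V_a=S_d$ for some $d$ of length $\geq\lh(a)$. This single invariant makes strict branches automatic (the odd-level sets form a nested sequence of basic clopen sets of unbounded depth), and there is no need for your auxiliary map $\phi$, the support-tree antichain argument, or the delicate ``forbidden $S_c$'' analysis along branches that repeatedly enter the $0$-child.

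Your route, by contrast, allows $W_a$ to be an arbitrary $\rho$-open subset of $S_{\phi(a)}$, so you must track $\phi$ explicitly and argue that the limit point $q=\bigcup_n\phi(p\uph n)$ actually lies in every $W_{p\uph n}$---which, as you note, is the crux and requires the observation that stripping $S_c$ permanently bars $\phi$ from extending $c$. That argument is sound, but the paper avoids it entirely. Two minor points you glossed over: you should say explicitly what to do at an $(\mathrm{L}_k)$ node when $W_a\subseteq U_k$ or $W_a\cap U_k=\varnothing$ (defaulting to a $(\mathrm{P})$ step suffices), and you should initialize $W_{\langle\rangle}:=\bset$, $\phi(\langle\rangle):=\langle\rangle$. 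With those in place your proof goes through; the paper's version simply trades your bookkeeping machinery for the stronger invariant $V_a\in\tau_{\bspace}$.
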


\begin{proof}
 Let ${X}$ be a  second-countable $\pi$-space. By Proposition~\ref{another_desc}, there is a topology $\rho\supseteq\tau_{\bspace}$ on $\bset$ such that
  \begin{itemize}
  \item[\ding{226}\,] $\tau_{\bspace} \setminus \{\varnothing\}$ is a $\pi$-base for $\ll \bset, \rho \rr$ and
  \item[\ding{226}\,] $\ll \bset, \rho \rr$ is homeomorphic to ${X}$.  \end{itemize}

Suppose that $\{{B}_{k} : {k} \text{ is odd}\}$ is a countable base for $\ll \bset , \rho \rr$. We build a  Souslin scheme ${\bf {V}} = \ll {V}_{a} \rr_{{a} \in \btree}$ on $\bset $ that partitions $\bset $ and such that:
 \begin{itemize}
   \item[(a)] $\forall {a} \in \btree\ [{V}_{a} \in \tau_{\bspace} \setminus \{\varnothing\}]$;
   \item[(b)] $\forall {a} \in \btree\ [$ if $\lh({a})$ is odd, then there exists ${d}\in\btree$ such that ${V}_{a} = {S}_{d}$ and $\lh({d}) \geq \lh({a})]$;
   \item[(c)] $\forall {k} \in \omega$ and $\forall {a} \in {}^{k} \omega\ [$ if ${k}$ is odd and ${V}_{a} \cap {B}_{k} \neq \varnothing$, then  $\bigcup_{{n} \geq 1} {V}_{{a}^\frown{n}} \subseteq {B}_{k}]$.
 \end{itemize}

It is easy to check that ${\bf {V}}$ is a Lusin $\pi$-base for $\ll \bset, \rho \rr$.

We build ${\bf {V}}$ by recursion on $\lh({a})$. Let ${V}_{\ll\rr} := \bset $. Suppose we have constructed ${V}_{a}$ for all ${a}\in\btree$ with $\lh({a}) \leq {k}$. Let ${a} \in {}^{k}\omega$; we will define ${V}_{{a}^{\frown}{n}}$ for all ${n} \in \omega$.

Suppose that ${k}$ is even. By condition (a), the set  ${V}_{a}$ is nonempty and open in the Baire space. 
If ${V}_{a}=\bset$, then put ${C}\coloneq\{\ll\rr\}$. If ${V}_{a}\neq\bset$, then put
$$
{C}\coloneq\{{c}
\in\btree:{S}_{c}\subseteq{V}_{a}\text{ and } {S}_{c\upharpoonright (\lh(c)-1)} \not \subseteq {V}_{a}\}.
$$
The family $\{{S}_{c}: {c} \in {C}\}$  is disjoint and ${V}_{a} = \bigcup_{{c} \in {C}}{S}_{c}$. 
Let $${D}\coloneq\{{c}\:{\hat{}}\:{d}:{c}\in{C}\text{ and }{d}\in{}^{k+1}\omega\}.$$ 
The family $\{{S}_{d}: {d} \in {D}\}$ is  disjoint and ${V}_{a}= \bigcup_{{d} \in {D}}{S}_{d}$. Also the set ${D}$ is infinite and  $\lh({d})\geq \lh({a})+1$ for all ${d}\in{D}$. Now we can define the sets ${V}_{{a}^{\frown}{n}}$, ${n}\in\omega$, in such a way that $\{{V}_{{a}^{\frown}{n}}:{n}\in\omega\}=\{{S}_{d}:{d}\in{D}\}$ and all ${V}_{{a}^{\frown}{n}}$ are different.

Suppose that ${k}$ is odd. If ${V}_{a}\cap{B}_{k} = \varnothing$, then we take ${V}_{{a}^{\frown}{n}}$, ${n}\in\omega$, as in the previous case. Suppose that ${V}_{a}\cap{B}_{k}\neq\varnothing$. Since $\tau_{\bspace}\setminus\{\varnothing\}$ is a $\pi$-base for $\ll\bset,\rho\rr$, there is ${c} \in \btree$ such that ${S}_{c}\subset {V}_{a}\cap{B}_{k}$. Let ${V}_{{a}^{\frown}0} := {V}_{a} \setminus {S}_{c}
\neq\varnothing$ and let ${V}_{{a}^{\frown}{n}} := {S}_{{c}^{\frown}({n}-1)}$ for all ${n}\geq 1$.
\end{proof}

From Theorem \ref{when_pi_has_lpb} and \cite[Theorem 3.7]{patrakeev2015metrizable} it follows that

\begin{corr}
If ${X}$ is a second-countable $\pi$-space, then there exists an open continuous map from ${X}$ onto every nonempty Polish space. \hfill$\qed$%
\end{corr}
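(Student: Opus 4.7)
The plan is essentially a two-step chain through the earlier material. Since $X$ is a second-countable $\pi\nos$-space, Theorem~\ref{when_pi_has_lpb} applies directly and yields a Lusin $\pi\nos$-base $\ll V_a \rr_{a \in \btree}$ on $X$. This converts the hypothesis into the stronger structural property of possessing a Lusin $\pi\nos$-base, which is exactly the input required by the next step.

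Next, I would invoke the result stated in the introduction and attributed to \cite[Theorem 3.7]{patrakeev2015metrizable}: every space with a Lusin $\pi\nos$-base admits a continuous open surjection onto every nonempty Polish space. Applying this to the Lusin $\pi\nos$-base obtained in the first step, for any nonempty Polish space $P$ we get a continuous open map from $X$ onto $P$, which is exactly the desired conclusion.

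Since both ingredients are already established (one in the present paper, one in the cited reference), there is no genuine obstacle: the proof is a one-line composition of results. The only thing worth emphasizing is that the conclusion quantifies over all nonempty Polish spaces $P$ simultaneously, but this quantifier is already present in the cited theorem, so no uniformity issue arises. Consequently, the whole argument reduces to the chain ``second-countable $\pi\nos$-space $\Rightarrow$ space with Lusin $\pi\nos$-base $\Rightarrow$ continuous open preimage of every nonempty Polish space'', and can be written in essentially one sentence.
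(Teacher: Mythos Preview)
Your proposal is correct and matches the paper's own justification exactly: the corollary is stated immediately after the sentence ``From Theorem~\ref{when_pi_has_lpb} and \cite[Theorem 3.7]{patrakeev2015metrizable} it follows that'', and your two-step chain (second-countable $\pi$-space $\Rightarrow$ Lusin $\pi$-base via Theorem~\ref{when_pi_has_lpb}, then Lusin $\pi$-base $\Rightarrow$ continuous open surjection onto every nonempty Polish space via the cited result) is precisely that derivation.
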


\begin{teo} \label{example}
 There exists a $\pi$-space that can not be mapped onto the Baire space by a continuous open map. It follows that there exists a $\pi$-space without a Lusin $\pi$-base.
\end{teo}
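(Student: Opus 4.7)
The second assertion follows from the first together with~\cite[Theorem~3.7]{patrakeev2015metrizable}: any space with a Lusin $\pi\nos$-base admits a continuous open map onto every nonempty Polish space, in particular onto $\bspace$. Hence the task reduces to exhibiting a $\pi\nos$-space that admits no continuous open surjection onto $\bspace$.

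My plan is to take the underlying set of $X$ to be $\bset$ and equip it with a topology $\rho$ strictly finer than $\tau_{\bspace}$ for which $\tau_{\bspace}\setminus\{\varnothing\}$ remains a $\pi\nos$-base; by Proposition~\ref{another_desc}, $\ll\bset,\rho\rr$ will then automatically be a $\pi\nos$-space. I would build $\rho$ by adjoining to $\tau_{\bspace}$ an uncountable family $\mathcal U=\{U_p:p\in A\}$ of new open sets indexed by a suitable subset $A\subseteq\bset$, where each $U_p$ has the form $\{p\}\cup W_p$ with $W_p\in\tau_{\bspace}$ disjoint from $\{p\}$ but containing arbitrarily small basic opens $S_a$ accumulating at $p$. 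This thickness condition makes the verification that $\tau_{\bspace}\setminus\{\varnothing\}$ is a $\pi\nos$-base for $\rho$ routine: every basic $\rho$-open set is of the form $S_c\cap\bigcap_{i}U_{p_i}$, and the thickness at each $p_i$ forces such a set to contain some nonempty $S_d$.

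To rule out continuous open surjections, I would argue by contradiction: suppose $f\colon\ll\bset,\rho\rr\to\bspace$ is such a map. Setting $V_a\coloneq f^{-1}[S_a]$ produces an open Souslin scheme partitioning $\bset$ and covering it. The openness of $f$ yields the key \emph{density} condition: for every $x\in\bset$ and every $\rho$-open $W\ni x$, there exists $n\in\omega$ such that $W\cap f^{-1}(\{q\})\neq\varnothing$ for every $q\in\bset$ extending $f(x)\uph n$. Applying this density condition at each special point $p\in A$ through the neighborhood $U_p$ should produce, by the combinatorial design of $\mathcal U$, a system of constraints on $f$ that cannot be simultaneously satisfied, yielding the desired contradiction.

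The hard part will be engineering $A$ and $\mathcal U$ so that the clash between the density condition and the special neighborhoods $U_p$ is unavoidable for \emph{every} hypothetical continuous open surjection, not only for identity-like or otherwise obvious maps. Once this combinatorial design is in place, checking the $\pi\nos$-base property for $\rho$ and extracting the final contradiction from the openness of $f$ are routine.
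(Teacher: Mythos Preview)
Your proposal is not a proof but a research plan, and the part you yourself flag as ``the hard part'' --- engineering $A$ and the family $\mathcal U$ so that the density condition clashes with the special neighbourhoods for \emph{every} continuous open surjection --- is in fact the entire content of the argument, and you have not carried it out. The density condition you extract from openness of $f$ amounts to: for each $p\in A$ there is $n_p$ with $S_{f(p)\uph n_p}\subseteq f[U_p]$. But this is nothing more than a restatement of ``$f[U_p]$ is an open neighbourhood of $f(p)$'', which is automatic; it is not clear what additional constraint it places on $f$, let alone one that could be made unsatisfiable. Adding isolated-flavoured neighbourhoods at prescribed points is a \emph{local} modification, while a hypothetical continuous open $f$ has enormous global freedom (it need not resemble the identity at all), so there is no evident mechanism by which your local gadgets would obstruct every such $f$ simultaneously. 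As written, the proposal contains no idea for this step.

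The paper's construction is completely different and avoids all combinatorics. One enlarges $\tau_{\bspace}$ by declaring the complement of every nowhere dense subset of $\bspace$ to be open; call the resulting topology $\tau_N$. That $\tau_{\bspace}\setminus\{\varnothing\}$ is a $\pi$-base for $\langle\bset,\tau_N\rangle$ is immediate, so this is a $\pi$-space. Now suppose $f\colon\langle\bset,\tau_N\rangle\to\bspace$ is continuous, open, and onto, and pick any $A\subseteq\bset$ that is nowhere dense in $\bspace$ but not closed. Openness of $f$ forces $f^{-1}[A]$ to be nowhere dense in $\langle\bset,\tau_N\rangle$, hence (using the $\pi$-base) nowhere dense in $\bspace$; by the very definition of $\tau_N$ its complement is $\tau_N$-open, so $f$ sends that complement to an open set, namely $\bset\setminus A$. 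This contradicts the choice of $A$. The obstruction here is a \emph{global} topological feature of $\tau_N$ (every nowhere dense set is closed), not a pointwise combinatorial one; that is the idea your sketch is missing.
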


\begin{proof}
Let $N$ be the family of nowhere dense subsets of the Baire space. Set ${B} := \{{U} \setminus {A} : {U} \in \tau_{\bspace} \text{ and } {A} \in {N}\}$. Since ${N}$ is closed under finite unions, it is easy to see that ${B}$ is closed under finite intersections. Also $B$ covers $\bset$, so $B$ is a base for some topology on $\bset$, which we denote by $\tau_{N}$. 

Let us prove that
 $$
  \ll \bset, \tau_{N} \rr \text{ is a } \pi \text{-space}.
 $$
Using Proposotion \ref{another_desc} it is enough to show that $\tau_{\bspace} \setminus \{\varnothing\}$ is a $\pi$-base for $\ll \bset, \tau_{N} \rr$. Since $\varnothing \in {N}$, it follows that $\tau_{\bspace} \subseteq \tau_{N}$. Take a nonempty ${W} \in \tau_{N}$. There exist ${U} \in \tau_{\bspace} \setminus \{\varnothing\}$ and ${A} \in {N}$ such that ${U} \setminus {A} \subseteq {W}$. Since ${A}$ is nowhere dense in $\bspace$, there exists ${V} \in \tau_{\bspace} \setminus \{\varnothing\}$ such that ${V} \subseteq {U} \setminus {A}$. It follows that ${V} \subseteq {W}$.
 
 Since $\tau_{\bspace} \setminus \{\varnothing\}$ is a $\pi$-base for $\ll \bset, \tau_{N} \rr$, it is easy to see that
 \begin{equation} \label{nwd}
  \forall {B} \subseteq \bset\ [\text{if } {B} \text{ is nowhere dense in } \ll \bset, \tau_{N} \rr \text{, then } {B} \in {N}].
 \end{equation}
 
 Now we prove that
 $$
  \text{there is no continuous open map from } \ll \bset, \tau_{N} \rr \text{ onto the Baire space}.
 $$
 Suppose on the contrary that $f : \ll \bset, \tau_{N} \rr \to \bspace$ is a continuous open surjection. Let ${A}$ be a nowhere dense and not closed subset of the Baire space. Since $f$ is open, it follows that ${f}^{-1}[{A}]$ is nowhere dense in $\ll \bset, \tau_{N} \rr$. Then  ${f}^{-1}[{A}] \in {N}$ by  (\ref{nwd}), so $\bset \setminus {f}^{-1}[{A}] \in \tau_{N}$, and hence $f\big[\bset \setminus {f}^{-1}[{A}]\big]$ is open in  $\bspace$. But $f\big[\bset \setminus {f}^{-1}[{A}]\big]$ equals $\bset\setminus{A}$, so $A$ is closed in $\bspace$, a contradiction.
\end{proof}

Note that the example of a $\pi\nos$-space in this theorem is not a $T_3$-space.

\section{Description of open images of $\pi$-spaces} \label{sec.4}

Recall that the \emph{Choquet game} on a nonempty space $X$ is defined as follows: Two players, I and II, alternately choose nonempty open sets\medskip

I $\ \ \ \ \ \ U_0\qquad\ \ \ \ U_1\qquad\qquad\ldots$

II $\qquad\ \ \ \ \ V_0\qquad\ \ \ \ \  V_1\qquad\qquad\ldots$\medskip\\
such that $U_0\supseteq V_0\supseteq U_1\supseteq V_1 \ldots\ .$ Player~II wins the run  $\big\ll\ll{U}_{0},{V}_{0}\rr,\ll{U}_{1},{V}_{1}\rr,\dots\big\rr$ of Choquet game on $X$ iff $\bigcap_{n}V_n\neq\varnothing;$ otherwise player~I wins this run.
A nonempty space $X$ is called a \emph{Choquet space} iff player~II has a winning strategy in the Choquet game on $X$. More precise definitions of this notions can be found in~\cite{kechris2012classical}.

In this section we will prove the following theorem:

\begin{teo} \label{bla_main_theorem}
 Let ${X}$ be a nonempty space. Then the following are equivalent\textup{:}
 \begin{itemize}
  \item[\textup{(a)}] ${X}$ is a continuous open image of a $\pi$-space.
  \item[\textup{(b)}] ${X}$ is a continuous open image of a space that can be mapped onto a Polish space by a continuous quasi-open bijection.
  \item[\textup{(c)}] ${X}$ is a Choquet space of countable $\pi$-weight and of cardinality not greater than continuum.
 \end{itemize}
\end{teo}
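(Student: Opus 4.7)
The plan is to close the cycle (a)$\Rightarrow$(b)$\Rightarrow$(c)$\Rightarrow$(a). The implication (a)$\Rightarrow$(b) is immediate from Proposition~\ref{another_desc}, since every $\pi$-space admits a continuous quasi-open bijection onto the Baire space and the Baire space is Polish. For (b)$\Rightarrow$(c) I would take a continuous open surjection $h\colon Y\to X$ and a continuous quasi-open bijection $g\colon Y\to P$ with $P$ Polish. The bound $|X|\le|Y|=|P|\le\mathfrak{c}$ is automatic; pulling a countable base of $P$ back through $g$ (using quasi-openness and bijectivity) yields a countable $\pi$-base of $Y$, and pushing the result forward through the open map $h$ yields a countable $\pi$-base of $X$. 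For the Choquet property I work on the homeomorphic copy $(P,\rho)$ of $Y$ given by Lemma~\ref{abt_qs_opn} and convert a Polish winning strategy for II on $(P,\tau_P)$ into one on $(P,\rho)$: whenever I plays a $\rho$-open set, II first shrinks it to a nonempty $\tau_P$-open subset (possible because $\tau_P\setminus\{\varnothing\}$ is a $\pi$-base for $\rho$) and then replies by the Polish strategy, leaving the intersection of II's responses unchanged. Then $X$ inherits the Choquet property from $Y$ via the standard transfer of a winning strategy through a continuous open surjection.

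For (c)$\Rightarrow$(a) I would fix a countable $\pi$-base $\{B_k:k\in\omega\}$ of $X$ and a winning strategy $\sigma$ for II in the Choquet game on $X$, and then construct a surjection $f\colon\bset\to X$ together with a topology $\rho$ on $\bset$ such that $\rho\supseteq\tau_{\bspace}$, $\tau_{\bspace}\setminus\{\varnothing\}$ is a $\pi$-base for $\rho$, and $f$ is a continuous open surjection; by Proposition~\ref{another_desc} this exhibits $(\bset,\rho)$ as a $\pi$-space mapping onto $X$. The heart of the construction is a recursive scheme $\langle U_a\rangle_{a\in\btree}$ of nonempty open subsets of $X$ built by running the Choquet game: set $U_{\langle\rangle}:=X$, record at each node $a$ a Choquet history $H_a$ of I's moves (initially empty), and for each $a$ and $k\in\omega$ declare the even-indexed child $U_{a\frown 2k}:=U_a$ with $H_{a\frown 2k}:=H_a$ (an \emph{identity} I-move) and the odd-indexed child $U_{a\frown(2k+1)}:=\sigma(H_a\frown I_{a,k})$ with $H_{a\frown(2k+1)}:=H_a\frown I_{a,k}$, where $I_{a,k}:=B_k\cap U_a$ whenever this is nonempty and $I_{a,k}:=U_a$ otherwise. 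The even children force $U_a=\bigcup_n U_{a\frown n}$, and the odd children provide the $\pi$-net property of $\{U_{a\frown n}:n\in\omega\}$ inside $U_a$: for any nonempty open $W\subseteq U_a$ pick $k$ with $B_k\subseteq W$, so that $I_{a,k}\subseteq W$ and $U_{a\frown(2k+1)}\subseteq W$. Completeness $\bigcap_n U_{p\uph n}\ne\varnothing$ holds for every $p\in\bset$, because branches with infinitely many odd coordinates trace a $\sigma$-consistent Choquet play and the remaining branches eventually stabilise at a nonempty~$U_a$. Defining $f(p)$ to be a chosen element of $\bigcap_n U_{p\uph n}$ and taking $\rho$ to be generated by $\tau_{\bspace}\cup\{f^{-1}[U]:U\in\tau_X\}$ makes $f$ continuous, and the $\pi$-net property of the scheme ensures that every nonempty basic $\rho$-open set $S_a\cap f^{-1}[U]$ contains some $S_{a\frown n}$, which is the desired $\pi$-base condition on $(\bset,\rho)$.

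The main obstacle is simultaneously securing surjectivity of $f$ and openness, both packaged in the single requirement $f[S_a]=U_a$ for every $a\in\btree$, and this is exactly where the hypothesis $|X|\le\mathfrak{c}$ enters. For every pair $(a,y)$ with $y\in U_a$ the all-even-coordinate extensions of $a$ form $\mathfrak{c}$ branches $p\in S_a$ along which the scheme stays equal to $U_a$ from level $\lh(a)$ onwards, so $y\in\bigcap_n U_{p\uph n}$; since there are at most $|\btree|\cdot|X|\le\mathfrak{c}$ such pairs, a transfinite recursion of length at most $\mathfrak{c}$ assigns pairwise distinct branches $p_{a,y}$ and prescribes $f(p_{a,y}):=y$, with arbitrary choices of $f(p)$ on the remaining branches. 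The inclusion $f[S_a]\subseteq U_a$ is automatic from $f(p)\in U_{p\uph\lh(a)}=U_a$, the reverse inclusion follows from the $p_{a,y}$'s, so $f[S_a]=U_a$; openness of $f$ is then immediate from $f[S_a\cap f^{-1}[U]]=U_a\cap U$, and the construction is complete.
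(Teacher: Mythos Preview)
Your proof is correct and follows the same overall arc as the paper: (a)$\Rightarrow$(b) via Proposition~\ref{another_desc}, (b)$\Rightarrow$(c) by checking that the three properties pass first from the Polish target up to $Y$ and then down through the open map $h$, and (c)$\Rightarrow$(a) by building a complete open Souslin scheme on $X$ from a winning Choquet strategy and then manufacturing a map $f\colon\bset\to X$ with $f[S_a]=U_a$ via a transfinite recursion that exploits $|X|\le\mathfrak{c}$.

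The organisation of (c)$\Rightarrow$(a) differs, however, and your version is more self-contained. The paper first builds the scheme $\mathbf{V}$ using a \emph{modified} strategy $\Gamma'$ (which forces $V_{a^\frown 0}=V_a$), then invokes a chain of lemmas: it replaces $\mathbf{V}$ by $\mathbf{V}^{g}$ for a two-to-one surjection $g$ so that every $\mathsf{branches}_{\mathbf{V}^{g}}(x)$ becomes dense-in-itself, proves that nonempty relatively open subsets of such branch sets have size $\mathfrak{c}$, and then runs the recursion to obtain a \emph{selector} (a map whose fibres are dense in the branch sets), from which $f[S_a]=V_a$ follows abstractly. You instead bake the needed multiplicity into the scheme from the start by reserving all even indices for the identity move $U_{a^\frown 2k}=U_a$; this gives, for each node $a$, a full continuum of ``stationary'' branches inside $S_a$ with fruit exactly $U_a$, so the transfinite enumeration of pairs $(a,y)$ with $y\in U_a$ can assign pairwise distinct witnesses directly, without any post-processing of the scheme and without the selector abstraction. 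The trade-off is that the paper's modular route isolates reusable notions (selectors, the $\mathbf{V}^{g}$ construction), whereas your route reaches the goal with less machinery.
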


\begin{corr}\label{corr.1}
A Hausdorff compact space is a continuous open image of a $\pi$-space if and only if it has a countable $\pi$-base and its cardinality is not greater than continuum. \hfill \qed 
\end{corr}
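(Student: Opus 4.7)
The plan is to deduce this corollary directly from Theorem \ref{bla_main_theorem}, since that theorem already characterizes continuous open images of $\pi$-spaces among all nonempty spaces. For a nonempty Hausdorff compact $X$, condition (c) of Theorem \ref{bla_main_theorem} reads: $X$ is a Choquet space of countable $\pi$-weight with $|X|\leq\mathfrak{c}$. Since having countable $\pi$-weight is, by definition, the same as having a countable $\pi$-base, the only gap between this and the statement of the corollary is the Choquet condition. So the entire task reduces to verifying that every nonempty Hausdorff compact space is automatically a Choquet space.

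To prove this classical fact, I would exhibit a winning strategy for player~II. Given a move $U_n$ by player~I, player~II uses the regularity of $X$ (Hausdorff compact implies regular) to choose a nonempty open $V_n\subseteq U_n$ with $\overline{V_n}\subseteq U_n$. For an arbitrary resulting play $U_0\supseteq V_0\supseteq U_1\supseteq V_1\supseteq\cdots$, we then have $\overline{V_{n+1}}\subseteq U_{n+1}\subseteq V_n$ for every $n$, so that $\{\overline{V_n}\}_{n\in\omega}$ is a decreasing family of nonempty closed subsets of the compact space $X$. By compactness $\bigcap_n \overline{V_n}\neq\varnothing$, and since $V_n\supseteq \overline{V_{n+1}}\supseteq V_{n+1}$ we get $\bigcap_n V_n=\bigcap_n \overline{V_n}\neq\varnothing$, so player~II wins.

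With this observation in hand, both directions of the corollary fall out. For the ``only if'' direction, Theorem \ref{bla_main_theorem}(a)$\Rightarrow$(c) yields countable $\pi$-weight and $|X|\leq\mathfrak{c}$ (the Choquet part being irrelevant here). For the ``if'' direction, a Hausdorff compact space with countable $\pi$-base and $|X|\leq\mathfrak{c}$ satisfies all three requirements of (c) once we invoke the Choquet fact above, and then Theorem \ref{bla_main_theorem}(c)$\Rightarrow$(a) provides the desired $\pi$-space and continuous open surjection. The degenerate case $X=\varnothing$ can be handled separately by taking the identity map, or excluded by the usual convention that compact spaces in this context are nonempty. There is no real obstacle: all the hard work is absorbed into Theorem \ref{bla_main_theorem}, and the new ingredient is just the standard strategy showing compact Hausdorff spaces are Choquet.
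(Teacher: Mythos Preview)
Your proposal is correct and matches the paper's approach exactly: the paper marks the corollary with \qed and gives no argument, treating it as immediate from Theorem~\ref{bla_main_theorem} together with the standard fact that compact Hausdorff spaces are Choquet, which is precisely what you spell out. One small slip in your aside on the empty case: the identity map on $\varnothing$ does not help, since a $\pi$-space is in bijection with $\omega^\omega$ and hence nonempty; your alternative of excluding $\varnothing$ by convention is the correct fix.
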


\begin{corr}\label{corr.2}
A second-countable space is a continuous open image of a $\pi$-space if and only if it is a continuous open image of a space with a Lusin $\pi$-base if and only if it is a Choquet space of cardinality not greater than continuum.
\end{corr}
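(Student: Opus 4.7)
The plan is to establish (I)\,$\Leftrightarrow$\,(III) directly from Theorem~\ref{bla_main_theorem} and (I)\,$\Leftrightarrow$\,(II) via Remark~\ref{pi.base} together with a topology-coarsening argument that invokes Theorem~\ref{when_pi_has_lpb}. Label the conditions (I) (open image of a $\pi$-space), (II) (open image of a space with a Lusin $\pi$-base), (III) (Choquet of cardinality at most continuum). Since $X$ is second-countable it has countable $\pi$-weight, so Theorem~\ref{bla_main_theorem} gives (I)\,$\Leftrightarrow$\,(III) immediately. The implication (II)\,$\Rightarrow$\,(I) is Remark~\ref{pi.base}: a space with a Lusin $\pi$-base is already a $\pi$-space. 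So the only real work is (I)\,$\Rightarrow$\,(II).

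For (I)\,$\Rightarrow$\,(II), fix a continuous open surjection $f\colon Y\to X$ from a $\pi$-space $Y$, and let $\mathbf{V}=\ll V_a\rr_{a\in\btree}$ witness that $Y$ is a $\pi$-space. Fix also a countable base $\mathcal{B}$ for $X$ and define $\sigma$ to be the topology on $Y$ generated by the countable subbase $\{V_a:a\in\btree\}\cup\{f^{-1}[B]:B\in\mathcal{B}\}$. Then $\sigma$ is second-countable and contained in the original topology on $Y$; the scheme $\mathbf{V}$ is still open and partitions $Y$ with strict branches on $\ll Y,\sigma\rr$ (these are set-theoretic conditions together with the fact that each $V_a$ sits in the subbase); and $f\colon\ll Y,\sigma\rr\to X$ remains continuous (since $\mathcal{B}$ is a base for $X$) and open (since $\sigma$ is coarser than the original topology and $f$ was already open in that topology). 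Once one verifies that $\{V_a:a\in\btree\}$ is still a $\pi$-base for $\ll Y,\sigma\rr$, the space $\ll Y,\sigma\rr$ is a second-countable $\pi$-space, and Theorem~\ref{when_pi_has_lpb} supplies a Lusin $\pi$-base on it; then $f$ witnesses (II).

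The main obstacle is therefore the $\pi$-base property of $\{V_a\}$ for $\sigma$. The key observation is the tree structure inherited from the partition: for $a,b\in\btree$, the sets $V_a$ and $V_b$ are either disjoint or linearly ordered by inclusion (according to whether $a$ and $b$ are $\sqsubseteq$-comparable), so any nonempty finite intersection of $V_a$'s collapses to a single $V_{a^{*}}$ indexed by the $\sqsubseteq$-longest of the indices. Consequently every nonempty basic $\sigma$-open set has the form $V_{a^{*}}\cap f^{-1}[U]$ for some nonempty open $U\subseteq X$; this set is also open in the original finer topology on $Y$, hence contains some $V_c$ by the $\pi$-base property of $\{V_a\}$ on the original $\pi$-space $Y$. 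Such a $V_c$ then lies inside any prescribed nonempty $\sigma$-open set, delivering the $\pi$-base condition. Everything else in the argument is routine bookkeeping.
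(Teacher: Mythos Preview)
Your proof is correct and follows essentially the same approach as the paper's: both reduce the implication (I)\,$\Rightarrow$\,(II) to Theorem~\ref{when_pi_has_lpb} by replacing the topology on the $\pi$-space with the coarser second-countable topology generated by the subbase $\{V_a:a\in\btree\}\cup\{f^{-1}[B]:B\in\mathcal{B}\}$. The paper simply asserts that the resulting space is a second-countable $\pi$-space and that $f$ stays continuous and open, whereas you supply the verification---in particular the tree-collapse argument showing $\{V_a\}$ remains a $\pi$-base for the new topology---that the paper leaves implicit.
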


\begin{proof}
    We only need to prove that if a second-countable space ${Y}$ is a continuous open image of a $\pi$-space, then it is a continuous open image of a space with a Lusin $\pi$-base. Let $f$ be a continuous open map from a $\pi$-space $\ll {X}, \tau \rr$ onto ${Y}$, $B$ a countable base for ${Y}$, and ${\bf {V}} = \langle {V}_{a} \rangle_{{a} \in \btree}$ an open Souslin scheme on $\ll {X}, \tau \rr$ that partitions ${X}$, has strict branches, and such that the family $\{{V}_{a} : {a} \in \btree\}$ is a $\pi$-base for $\ll {X}, \tau \rr$. Consider the topology $\rho$ on ${X}$ generated by the subbase $\{{V}_{a} : {a} \in \btree\} \cup \{{f}^{-1}[{U}] : {U} \in {B}\}$. It is easy to see that $\ll {X}, \rho \rr$ is a second-countable $\pi$-space and $f \colon \ll {X}, \rho \rr \to {Y}$ is a continuous open surjection. From Theorem \ref{when_pi_has_lpb} it follows that $\ll {X}, \rho \rr$ has a Lusin $\pi$-base.
\end{proof}

\begin{ques}\label{quest}
Do the class of continuous open images of $\pi$-spaces equals the class of continuous open images of spaces with a Lusin $\pi$-base?
\end{ques}

\begin{nota}\label{not.S.N}\mbox{\ } Let ${\bf {V}} = \langle {V}_{a} \rangle_{{a} \in \btree}$ be a Souslin scheme. Then
  \begin{itemize}
 \item [\ding{46}\,] $\mathsf{flesh}({\bf {V}}) = \bigcup_{{a} \in \btree} {V}_{a}$;
 \item [\ding{46}\,]  $\mathsf{branches}_{\bf {V}} ({x}) := \{q \in \bset : x \in \fruit{q}{{\bf V}}\}$.
  \end{itemize}
\end{nota}

\begin{deff}
 A {\it $\pi$-net} Souslin scheme on a space ${X}$ is a Souslin scheme ${\bf {V}} = \langle {V}_{a} \rangle_{{a} \in \btree}$ such that $\mathsf{flesh}({\bf {V}})\subseteq{X}$ and the family $\{{V}_{b}: {b} \sqsupseteq {a}\}$ is a $\pi$-net for the subspace ${V}_{a}$ of ${X}$ for all ${a} \in \btree$.

 A {\it $\pi$-base} Souslin scheme on a space ${X}$ is an open $\pi$-net Souslin scheme on ${X}$.
\end{deff}

\begin{deff}
 A \emph{selector} on a Souslin scheme ${\bf {V}}$ is a surjection  ${f}\colon\bset\to\mathsf{flesh}({\bf {V}})$ such that for all ${x} \in \mathsf{flesh}({\bf {V}})$, the preimage ${f}^{-1}(x)$ is a dense subset of the subspace $\mathsf{branches}_{\bf {V}}({x})$ of the Baire space.
\end{deff}

If a Souslin scheme ${\bf {V}}$ has  strict branches and covers a set ${X}$, then the function ${f} \colon \bset \to {X}$ such that $\{{f}({p})\} = \fruit{p}{{\bf {V}}}$ is a selector on ${\bf {V}}$.

A less trivial example of a selector can be obtained as follows. Let ${f}$ be a continuous surjection from the Baire space onto a space ${X}$. Let ${V}_{a} \coloneq {f}[{S}_{a}]$ for all ${a} \in \btree$. Then ${f}$ is a selector on $\ll {V}_{a} \rr_{{a} \in \btree}$.

\begin{lemm} \label{main_property_of_fiber_scheme}
 Let ${\bf {V}} = \langle {V}_{a} \rangle_{{a} \in \btree}$ be a Souslin scheme that covers $\mathsf{flesh}({\bf {V}})$ and let ${f}$ be a selector on ${\bf {V}}$. Then ${f}[{S}_{a}] = {V}_{a}$ for all ${a} \in \btree$.
\end{lemm}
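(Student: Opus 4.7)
The plan is to prove the two inclusions $f[S_a]\subseteq V_a$ and $V_a\subseteq f[S_a]$ separately, using the covering hypothesis and the density requirement in the definition of selector.

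First I would record the monotonicity of $\mathbf{V}$: since $V_b=\bigcup_{n}V_{b^\frown n}$ for every $b$, an immediate induction on $\lh(c)-\lh(b)$ shows that $b\sqsubseteq c$ implies $V_c\subseteq V_b$. This will be used in both directions.

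For the inclusion $f[S_a]\subseteq V_a$, take $p\in S_a$, so $a\sqsubseteq p$. By the definition of a selector, $f^{-1}(f(p))$ is contained in $\mathsf{branches}_{\mathbf{V}}(f(p))$, hence $p\in\mathsf{branches}_{\mathbf{V}}(f(p))$, which by the definition of $\mathsf{branches}_{\mathbf{V}}$ unpacks to $f(p)\in\bigcap_{n\in\omega}V_{p\uph n}$. In particular $f(p)\in V_{p\uph\lh(a)}=V_{a}$.

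The reverse inclusion $V_a\subseteq f[S_a]$ is the substantive step. Given $x\in V_a$, I would first build a branch $q\in\bset$ with $a\sqsubseteq q$ such that $x\in V_{q\uph n}$ for every $n$. The initial segment $q\uph\lh(a)=a$ is fixed; the recursion uses the hypothesis that $\mathbf{V}$ covers $\mathsf{flesh}(\mathbf{V})$: whenever $x\in V_{b}$, the identity $V_b=\bigcup_{n}V_{b^\frown n}$ yields some $n$ with $x\in V_{b^\frown n}$, and we extend $q$ by that $n$. Monotonicity handles the initial segments of length less than $\lh(a)$. The resulting $q$ lies in $S_a\cap\mathsf{branches}_{\mathbf{V}}(x)$, so this intersection is a nonempty open subset of the subspace $\mathsf{branches}_{\mathbf{V}}(x)$ of $\bspace$ (since $S_a$ is open in $\bspace$). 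Because $f^{-1}(x)$ is dense in that subspace, there exists $p\in S_a\cap f^{-1}(x)$, witnessing $x=f(p)\in f[S_a]$.

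The only step that requires any care is the construction of the witnessing branch $q$ starting at the node $a$; the covering condition is exactly what makes the recursion go through, and the density clause in the selector definition then converts this combinatorial branch into an actual preimage of $x$ under $f$.
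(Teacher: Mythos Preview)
Your proof is correct and follows essentially the same approach as the paper's: both split into the two inclusions, use the fact that a selector satisfies $p\in\mathsf{branches}_{\mathbf{V}}(f(p))$ for $\subseteq$, and for $\supseteq$ use the covering hypothesis to show $S_a\cap\mathsf{branches}_{\mathbf{V}}(x)\neq\varnothing$ and then invoke density of $f^{-1}(x)$. The only difference is that you spell out the recursive construction of the branch $q\sqsupseteq a$ with $x\in\fruit{q}{\mathbf{V}}$ and the monotonicity $V_c\subseteq V_b$ for $b\sqsubseteq c$, whereas the paper compresses this into the single assertion that covering implies $S_a\cap\mathsf{branches}_{\mathbf{V}}(x)\neq\varnothing$.
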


\begin{proof}
 Let ${a} \in \btree$. We  prove two inclusions.
 
 For $\subseteq$: Let ${p} \in {S}_{a}$. Since ${f}$ is a selector on ${\bf {V}}$ we have ${p} \in \mathsf{branches}_{\bf {V}}({f}({p}))$; that is, ${f}({p}) \in \fruit{p}{{\bf {V}}}$. Also we have $\fruit{p}{{\bf {V}}} \subseteq {V}_{{p} \upharpoonright \lh({a})}$ and ${p} \upharpoonright \lh({a}) = {a}$, therefore ${f}({p}) \in {V}_{a}$.

 For $\supseteq$: Let ${x} \in {V}_{a}$. Since ${\bf {V}}$  covers $\mathsf{flesh}({\bf {V}})$, it follows that ${S}_{a} \cap \mathsf{branches}_{\bf {V}}({x}) \neq \varnothing$. Since ${f}^{-1}({x})$ is dense in $\mathsf{branches}_{\bf {V}}({x})$, there exists ${q} \in ({S}_{a} \cap \mathsf{branches}_{\bf {V}}({x})) \cap {f}^{-1}({x})$, so ${x} = {f}({q}) \in {f}[{S}_{a}]$.
\end{proof}

\begin{deff}
 Let $\langle {X}, \tau \rangle$ be a space, ${\bf {V}}$ a Souslin scheme that covers ${X}$, and ${f}$ a selector on ${\bf {V}}$. Then $\sigma_{\tau, {f}}$ is the topology on $\bset$ generated by the subbase $\{{f}^{-1}[{U}]: {U} \in \tau\}\cup \{{S}_{a}: {a} \in \btree\}$.
\end{deff}

\begin{rema}\label{rema.sigma.tau.f}
 Let $\langle {X}, \tau \rangle$ be a space, ${\bf {V}}$ a Souslin scheme that covers ${X}$, and ${f}$ a selector on ${\bf {V}}$. Then  the family $\{{f}^{-1}[{U}]\cap {S}_{a}:{U} \in \tau,\ {a} \in \btree\}$ is a base for the topology $\sigma_{\tau,{f}}$.  \hfill \qed  
\end{rema}

\begin{lemm} \label{surjection}
 Let ${f} \colon {A} \to {X}$ be a surjection, ${S} \subseteq {A}$, ${V} \subseteq {X}$, and ${f}[{S}] = {V}$. Then ${f}\big[{f}^{-1}[{U}] \cap{S}\big] = {U} \cap {V}$ for all ${U} \subseteq {X}$. \hfill \qed 
\end{lemm}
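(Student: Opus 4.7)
The plan is to prove the set equality $f[f^{-1}[U]\cap S]=U\cap V$ by a standard double inclusion. The hypothesis that $f$ is a surjection onto $X$ is actually unnecessary; the only facts I will use are that $f$ is a function defined on $A$ and that $f[S]=V$. The argument is purely elementary set theory, so there is no real obstacle — the task is just to lay out the two directions carefully.

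For the inclusion $f[f^{-1}[U]\cap S]\subseteq U\cap V$, I would pick $y\in f[f^{-1}[U]\cap S]$ and choose a witness $a\in f^{-1}[U]\cap S$ with $f(a)=y$. Membership in $f^{-1}[U]$ gives $y=f(a)\in U$, while membership in $S$ together with the hypothesis $f[S]=V$ gives $y=f(a)\in V$. Combining these two memberships yields $y\in U\cap V$.

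For the reverse inclusion $U\cap V\subseteq f[f^{-1}[U]\cap S]$, I would pick $y\in U\cap V$. Since $y\in V=f[S]$, I can pick a preimage $a\in S$ with $f(a)=y$. Because $f(a)=y\in U$, this $a$ also lies in $f^{-1}[U]$, and hence in $f^{-1}[U]\cap S$. Therefore $y=f(a)\in f[f^{-1}[U]\cap S]$, which completes the proof.
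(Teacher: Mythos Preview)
Your argument is correct; the paper itself offers no proof at all (the statement is marked with a bare \qed), so your double-inclusion verification is exactly the routine check the authors omit. Your observation that surjectivity of $f$ onto $X$ is not needed is also correct: only $f[S]=V$ is used.
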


\begin{lemm} \label{fXV_is_open}
 Let $\langle {X}, \tau \rangle$ be a space, ${\bf {V}}$ an open Souslin scheme on $\langle {X}, \tau \rangle$ that covers ${X}$, and ${f}$ a selector on ${\bf {V}}$. Then ${f} \colon \langle \bset, \sigma_{\tau, {f}} \rangle \to \langle {X}, \tau \rangle$ is a continuous open surjection.
\end{lemm}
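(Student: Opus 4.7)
The plan is to verify in turn that $f$ is a surjection, that $f$ is continuous, and that $f$ is open; nothing deep is required, since all the pieces have been prepared in the preceding lemmas and definitions.

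First I would observe that $\mathsf{flesh}(\mathbf{V})=X$. Indeed, since $\mathbf{V}$ covers $X$, we have $V_{\langle\rangle}=X\subseteq\mathsf{flesh}(\mathbf{V})$, and a straightforward induction on $\lh(a)$ using the relation $V_a=\bigcup_{n\in\omega}V_{a\hspace{0.5pt}^{\frown}n}$ shows that every $V_a\subseteq X$, hence $\mathsf{flesh}(\mathbf{V})\subseteq X$. Because $f$ is by definition a selector on $\mathbf{V}$, it is a surjection onto $\mathsf{flesh}(\mathbf{V})=X$.

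For continuity, note that the subbase $\{f^{-1}[U]:U\in\tau\}\cup\{S_a:a\in\btree\}$ that generates $\sigma_{\tau,f}$ already contains $f^{-1}[U]$ for every $U\in\tau$, so continuity is immediate from the definition of $\sigma_{\tau,f}$.

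Openness is the only part that uses the assumption that $\mathbf{V}$ is open. By Remark~\ref{rema.sigma.tau.f}, the family $\{f^{-1}[U]\cap S_a:U\in\tau,\ a\in\btree\}$ is a base for $\sigma_{\tau,f}$, so it suffices to show that $f$ sends each basic open set to a member of $\tau$. Since $\mathbf{V}$ covers $X=\mathsf{flesh}(\mathbf{V})$, Lemma~\ref{main_property_of_fiber_scheme} gives $f[S_a]=V_a$, and then Lemma~\ref{surjection} (applied with $A=\bset$, $S=S_a$, $V=V_a$) yields $f\bigl[f^{-1}[U]\cap S_a\bigr]=U\cap V_a$, which lies in $\tau$ because $U\in\tau$ and $V_a\in\tau$ by the openness of $\mathbf{V}$. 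Images of arbitrary open sets are then open, being unions of such $U\cap V_a$. I do not expect any real obstacle here; the proof is essentially a bookkeeping assembly of the earlier lemmas.
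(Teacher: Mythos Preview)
Your proof is correct and follows essentially the same route as the paper: the paper simply asserts that $f$ is a continuous surjection and then proves openness by applying Remark~\ref{rema.sigma.tau.f}, Lemma~\ref{main_property_of_fiber_scheme}, and Lemma~\ref{surjection} to compute $f\bigl[f^{-1}[U]\cap S_a\bigr]=U\cap V_a$. Your version just spells out the surjectivity and continuity in more detail (including the observation $\mathsf{flesh}(\mathbf{V})=X$), but the structure and the key step are identical.
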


\begin{proof}
 It is easy to see that ${f}$ is a continuous surjection. By Remark~\ref{rema.sigma.tau.f}, $\{{f}^{-1}[{U}]\cap {S}_{a}:{U} \in \tau,\ {a} \in \btree\}$ is a base for  $\sigma_{\tau,{f}}$, so it is enough to prove that ${f}\big[{f}^{-1}[{U}]\cap {S}_{a}\big]$ is open for all ${U} \in \tau$ and ${a} \in \btree$. It is true, because  ${f}\big[{f}^{-1}[{U}]\cap {S}_{a}\big] = {U} \cap {V}_{a}$ by Lemmas \ref{surjection} and  \ref{main_property_of_fiber_scheme}.
\end{proof}

\begin{lemm} \label{weqw}
  Let $\langle {X}, \tau \rangle$ be a space, ${\bf {V}}$ a Souslin scheme that covers ${X}$, and ${f}$ a selector on ${\bf {V}}$. Then the weight of the space $\langle \bset, \sigma_{\tau, {f}}\rangle$ is less than or equal to the weight of $\langle {X}, \tau \rangle$. \hfill \qed 
\end{lemm}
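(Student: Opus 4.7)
The plan is to start from the base description in Remark~\ref{rema.sigma.tau.f} and refine it using a base of $\tau$ of minimal cardinality. Let $w$ denote the weight of $\langle X,\tau\rangle$, which we may assume is at least $\aleph_{0}$, and fix a base $\mathcal{B}\subseteq\tau$ with $|\mathcal{B}|=w$.

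Consider the family
\[
\mathcal{B}'\;\coloneq\;\bigl\{f^{-1}[U]\cap S_{a} : U\in\mathcal{B},\ a\in\btree\bigr\}.
\]
Since $|\btree|=\aleph_{0}$, we have $|\mathcal{B}'|\leq|\mathcal{B}|\cdot\aleph_{0}=w$, so it remains only to show that $\mathcal{B}'$ is a base for $\sigma_{\tau,f}$.

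For this, let $W\in\sigma_{\tau,f}$ and $p\in W$. By Remark~\ref{rema.sigma.tau.f}, there exist $V\in\tau$ and $a\in\btree$ with $p\in f^{-1}[V]\cap S_{a}\subseteq W$. Since $f(p)\in V$ and $\mathcal{B}$ is a base for $\tau$, we can pick $U\in\mathcal{B}$ with $f(p)\in U\subseteq V$. Then $p\in f^{-1}[U]\cap S_{a}\in\mathcal{B}'$ and this set is contained in $f^{-1}[V]\cap S_{a}\subseteq W$. Hence $\mathcal{B}'$ is a base for $\sigma_{\tau,f}$, which gives $w(\langle\bset,\sigma_{\tau,f}\rangle)\leq|\mathcal{B}'|\leq w$.

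There is no real obstacle here; the only point to be slightly careful about is the cardinal arithmetic when $w(X)$ might be taken to be finite. Under the standard convention that weight is at least $\aleph_{0}$, the bound $|\mathcal{B}'|\leq|\mathcal{B}|\cdot\aleph_{0}=|\mathcal{B}|$ is immediate, and the argument is complete.
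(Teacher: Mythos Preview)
Your proof is correct and is precisely the routine argument the authors intended: in the paper the lemma is stated with a bare $\qed$ and no proof, and your use of Remark~\ref{rema.sigma.tau.f} together with a base of minimal size for $\tau$ is the obvious way to fill in the details.
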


\begin{lemm} \label{sigma_tauV_pispace}
 Let $\langle {X}, \tau \rangle$ be a space, ${\bf {V}} = \ll {V}_{a} \rr_{{a} \in \btree}$ a $\pi$-net Souslin scheme on $\ll {X}, \tau \rr$  that covers ${X}$, and ${f}$ a selector on ${\bf {V}}$. Then $\langle \bset,  \sigma_{\tau, {f}}\rangle$ is a $\pi$-space.
\end{lemm}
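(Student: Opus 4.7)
The plan is to apply Proposition~\ref{another_desc} in the direction (c)$\Rightarrow$(a), taking the target space in (c) to be $\langle \bset, \sigma_{\tau, f}\rangle$ itself and the witnessing topology $\rho$ to be $\sigma_{\tau, f}$. Since $\{S_a : a \in \btree\}$ is part of the defining subbase for $\sigma_{\tau, f}$, the inclusion $\sigma_{\tau, f} \supseteq \tau_{\bspace}$ is immediate, and $\langle \bset, \sigma_{\tau, f}\rangle$ is trivially homeomorphic to itself. So the only substantive task is to verify that $\tau_{\bspace} \setminus \{\varnothing\}$ is a $\pi$-base for $\langle \bset, \sigma_{\tau, f}\rangle$.

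By Remark~\ref{rema.sigma.tau.f}, it is enough to work with a nonempty basic set $W = f^{-1}[U] \cap S_a$ where $U \in \tau$ and $a \in \btree$, and find some $b \in \btree$ with $\varnothing \neq S_b \subseteq W$. First I would use Lemma~\ref{main_property_of_fiber_scheme} to rewrite: picking $p \in W$ we have $f(p) \in U$ and $f(p) \in f[S_a] = V_a$, so $U \cap V_a \neq \varnothing$. Since $V_a$ is open in $\langle X, \tau \rangle$, the set $U \cap V_a$ is a nonempty open subset of the subspace $V_a$. Now the $\pi$-net hypothesis on $\mathbf{V}$ gives some $b \sqsupseteq a$ with $\varnothing \neq V_b \subseteq U \cap V_a$.

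Finally I would verify $S_b \subseteq W$. The inclusion $S_b \subseteq S_a$ is immediate from $b \sqsupseteq a$. For any $q \in S_b$, Lemma~\ref{main_property_of_fiber_scheme} gives $f(q) \in f[S_b] = V_b \subseteq U$, so $q \in f^{-1}[U]$. Together, $S_b \subseteq f^{-1}[U] \cap S_a = W$, and $S_b$ is nonempty (being the image of the nonempty $V_b$ under the surjection $f$, or just observing that $S_b$ is always nonempty). This completes the verification.

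The main (minor) obstacle is keeping the bookkeeping straight: the $\pi$-net property lives on the $X$-side while the $\pi$-base we want lives on the $\bset$-side, so Lemma~\ref{main_property_of_fiber_scheme} is what makes the two pictures interchangeable. Once that bridge is in hand, the argument is purely formal.
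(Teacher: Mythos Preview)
Your argument is essentially the paper's own proof: both verify condition~(c) of Proposition~\ref{another_desc} with $\rho=\sigma_{\tau,f}$, reduce via Remark~\ref{rema.sigma.tau.f} to a nonempty basic set $f^{-1}[U]\cap S_a$, use Lemma~\ref{main_property_of_fiber_scheme} to pass to $U\cap V_a\neq\varnothing$, invoke the $\pi$-net hypothesis to obtain $b\sqsupseteq a$ with $V_b\subseteq U\cap V_a$, and then push back to $S_b\subseteq f^{-1}[U]\cap S_a$ via Lemma~\ref{main_property_of_fiber_scheme} again. One small slip: you assert ``$V_a$ is open in $\langle X,\tau\rangle$'', but $\mathbf{V}$ is only assumed to be a $\pi$-\emph{net} scheme, not an open one; fortunately this premise is unnecessary, since $U\cap V_a$ is open in the subspace $V_a$ by the definition of the subspace topology regardless of whether $V_a$ is open in $X$.
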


\begin{proof}
 We prove that condition (c) in Proposition~\ref{another_desc} is satisfied for $\rho=\sigma_{\tau,f}$. Obviously, $\sigma_{\tau,{f}}\supseteq\tau_{\bspace}$. By Remark~\ref{rema.sigma.tau.f}, $\{{f}^{-1}[{U}]\cap {S}_{a}:{U} \in \tau,\ {a} \in \btree\}$ is a base for  $\sigma_{\tau,{f}}$, so it is enough to prove that for all ${U} \in \tau$ and ${a} \in \btree$ with nonempty ${f}^{-1}[{U}]\cap {S}_{a}$, there exists ${b} \in \btree$ such that ${S}_{b} \subseteq {f}^{-1}[{U}]\cap {S}_{a}$.

 From Lemmas \ref{surjection} and  \ref{main_property_of_fiber_scheme} it follows that $f\big[{f}^{-1}[{U}]\cap {S}_{a}\big] = {U} \cap {V}_{a}$,  so ${U} \cap {V}_{a}$ is nonempty open set in the subspace ${V}_{a}$ of $\ll{X},\tau\rr$. Then  there exists ${b} \in \btree$ such that ${b} \sqsupseteq {a}$ and ${V}_{b} \subseteq {U} \cap {V}_{a}$ because ${\bf {V}}$ is a $\pi$-net Souslin scheme on $\ll {X}, \tau \rr$. From Lemma \ref{main_property_of_fiber_scheme} it follows that ${S}_{b} \subseteq {f}^{-1}[{V}_{b}] \subseteq {f}^{-1}[{U}]$, and since ${b} \sqsupseteq {a}$, we see that ${S}_{b} \subseteq {S}_{a}$. Therefore ${S}_{b} \subseteq {f}^{-1}[{U}]\cap {S}_{a}$. 
\end{proof}

\begin{lemm} \label{lemm_perm}
 Let ${\bf {V}} = \langle {V}_{a} \rangle_{{a} \in \btree}$ be a Souslin scheme and let ${g} \colon \omega \to \omega$ be a surjection. Then
 \begin{itemize}
 
\item[\textup{(a)}] $\{{g}\circ({a}\:{\hat{}}\:{c}):{c}\in\btree\}=\{({g}\circ{a})\:{\hat{}}\:{c}:{c}\in\btree\}$ for all ${a}\in\btree$;

\item[\textup{(b)}]  $\bigcup_{{n} \in \omega}V_{({g}\circ {a})\hspace{0.5pt}^{\frown}{n}} = \bigcup_{{n} \in \omega}{V}_{{g}\circ ({a}\hspace{0.5pt}^{\frown}{n})}$ for all ${a} \in \btree$;

\item[\textup{(c)}]  $\bigcap_{{n} \in \omega} {V}_{{g}\circ ({q}\uph{n})} = \bigcap_{{n} \in \omega} {V}_{({g}\circ {q}) {\upharpoonright} {n}}$ for all ${q} \in \bset$.\hfill \qed 
 \end{itemize}
\end{lemm}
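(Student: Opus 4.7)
The key observation is that function composition commutes, in the appropriate sense, with both sequence concatenation and restriction. Specifically, for any $a \in \btree$ and $c \in \btree$, we have the pointwise identity $g \circ (a \hspace{0.5pt}^{\frown} c) = (g \circ a) \hspace{0.5pt}^{\frown} (g \circ c)$, and for any $q \in \bset$ and $n \in \omega$, we have $g \circ (q \uph n) = (g \circ q) \uph n$. Both of these are routine: the domains agree, and on each index $i$ the two sides compute $g(a(i))$ or $g(q(i))$ respectively. Once these identities are in hand, the three parts are short.

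For part (a), the plan is to rewrite $\{g \circ (a \hspace{0.5pt}^{\frown} c) : c \in \btree\}$ as $\{(g \circ a) \hspace{0.5pt}^{\frown} (g \circ c) : c \in \btree\}$ using the concatenation identity, and then observe that $\{g \circ c : c \in \btree\} = \btree$. The latter uses surjectivity of $g$: given any $d \in \btree$ of length $k$, pick $c(i) \in g^{-1}(d(i))$ for each $i < k$ to obtain $c \in \btree$ with $g \circ c = d$. So the family reindexes to $\{(g \circ a) \hspace{0.5pt}^{\frown} d : d \in \btree\}$, which is the right-hand side.

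For part (b), one can apply (a) specialised to one-term extensions, or just compute directly: since $g \circ (a \hspace{0.5pt}^{\frown} n) = (g \circ a) \hspace{0.5pt}^{\frown} g(n)$ and $g$ is surjective, as $n$ ranges over $\omega$, the sequence $(g \circ a) \hspace{0.5pt}^{\frown} g(n)$ ranges over exactly $\{(g \circ a) \hspace{0.5pt}^{\frown} m : m \in \omega\}$, so the two indexed unions agree. For part (c), the restriction identity $g \circ (q \uph n) = (g \circ q) \uph n$ makes the indexed sets on the two sides identical term by term, and the intersections coincide trivially.

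I do not expect any obstacle here; the statement is essentially bookkeeping about how a surjection $g$ interacts with sequence operations, and the only place surjectivity plays a substantive role is in (a), where it is needed to conclude that $\{g \circ c : c \in \btree\}$ exhausts $\btree$, and correspondingly in (b), where it is needed to conclude that $\{g(n) : n \in \omega\} = \omega$.
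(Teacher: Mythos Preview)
Your argument is correct; the identities $g\circ(a\:{\hat{}}\:c)=(g\circ a)\:{\hat{}}\:(g\circ c)$ and $g\circ(q\uph n)=(g\circ q)\uph n$ are exactly what is needed, and surjectivity of $g$ is used precisely where you say. The paper states this lemma without proof (it is marked with a bare \qedsymbol), so your write-up simply fills in the routine verification the authors omitted.
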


\begin{lemm} \label{blabla}
  Let ${\bf {V}} = \langle {V}_{a} \rangle_{{a} \in \btree}$ be a Souslin scheme and ${x} \in \mathsf{flesh}({\bf {V}})$. Suppose that for all ${a} \in \btree$, if ${V}_{a}\ni{x} $, then there are ${n} \neq {m} \in \omega$ such that $ {V}_{{a}\hspace{0.5pt}^{\frown}{n}}\ni{x}$ and ${V}_{{a}\hspace{0.5pt}^{\frown}{m}}\ni{x}$. Then $\mathsf{branches}_{\bf {V}}({x})$ is a dense-in-itself subspace of the Baire space.\hfill \qed 
\end{lemm}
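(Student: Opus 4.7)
The plan is to prove directly that no point of $\mathsf{branches}_{\bf V}(x)$, viewed as a subspace of the Baire space, is isolated. Since the sets $S_a$ form a base for the Baire space, a basic open neighborhood of a point $q \in \mathsf{branches}_{\bf V}(x)$ has the form $S_a \cap \mathsf{branches}_{\bf V}(x)$ for some $a \sqsubseteq q$. It is therefore enough to show that every such set contains a point $q' \neq q$ of $\mathsf{branches}_{\bf V}(x)$.

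So I would fix $q \in \mathsf{branches}_{\bf V}(x)$ and $a \sqsubseteq q$. Since $q \in \mathsf{branches}_{\bf V}(x)$, in particular $x \in V_a$. By hypothesis there exist $n \neq m$ in $\omega$ with $x \in V_{a^\frown n}$ and $x \in V_{a^\frown m}$. At most one of $n, m$ equals $q(\lh(a))$, so I can pick $n^\ast \in \{n, m\}$ with $n^\ast \neq q(\lh(a))$ and $x \in V_{a^\frown n^\ast}$. Any branch $q' \in \bset$ extending $a^\frown n^\ast$ will satisfy $q'(\lh(a)) = n^\ast \neq q(\lh(a))$, hence $q' \neq q$, and $a^\frown n^\ast \sqsubseteq q'$ gives $q' \in S_a$.

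It remains to extend $a^\frown n^\ast$ to some $q' \in \mathsf{branches}_{\bf V}(x)$. I would build $q'$ recursively: set $q' \uph (\lh(a){+}1) := a^\frown n^\ast$, and given $q' \uph k$ with $x \in V_{q' \uph k}$, apply the hypothesis to the node $q'\uph k$ to obtain (at least) one $i \in \omega$ with $x \in V_{(q'\uph k)^\frown i}$, and set $q'(k) := i$. By construction $x \in V_{q' \uph k}$ for every $k \in \omega$, so $x \in \fruit{q'}{\bf V}$, i.e. $q' \in \mathsf{branches}_{\bf V}(x)$, and $q' \in S_a$, $q' \neq q$ as noted.

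There is no real obstacle here; the only thing to be slightly careful about is the splitting step, where I must ensure the chosen extension differs from $q$ at the immediate successor of $a$ (which is why the hypothesis yielding \emph{two} distinct children is used, rather than merely one). The recursive construction of $q'$ itself only needs the existence of at least one good child at each later stage, which the hypothesis also guarantees.
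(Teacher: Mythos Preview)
Your proof is correct and is exactly the standard argument one would expect; the paper itself omits the proof entirely (the lemma is stated with an immediate \qed), so there is nothing to compare against beyond noting that your write-up supplies the details the authors left implicit.
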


\begin{nota}
 Suppose that ${\bf {V}} = \langle {V}_{a} \rangle_{{a} \in \btree}$ is a Souslin scheme and ${g} \colon \omega \to \omega$. Then ${\bf {V}}^{{g}} = \langle {V}^{{g}}_{a} \rangle_{{a} \in \btree}$ is a Souslin scheme such that ${V}^{{g}}_{a} \coloneq {V}_{{g} \circ {a}}$ for all ${a} \in \btree$.
\end{nota}

\begin{lemm}\label{prop_of_Pfv}
 Let ${\bf {V}} = \langle {V}_{a} \rangle_{{a} \in \btree}$ be a Souslin scheme, ${g} \colon \omega \rightarrow \omega$ a surjection, and ${X}$ a space. Then:
 \begin{itemize}
  \item[\textup{(a)}]  If $\,{\bf {V}}\!$ covers ${X}$, then ${\bf {V}}^{{g}}$ covers ${X}$.

  \item[\textup{(b)}]  If $\,{\bf {V}}\!$ is complete, then ${\bf {V}}^{{g}}$ is complete.

  \item[\textup{(c)}]  If $\,{\bf {V}}\!$ is open on ${X}$, then ${\bf {V}}^{{g}}$ is open on ${X}$.

  \item[\textup{(d)}]  If $\,{\bf {V}}\!$ covers ${X}$ and $|{g}^{-1}({n})| \geq 2$ for all ${n} \in \omega$, then $\mathsf{branches}_{{\bf {V}}^{{g}}}({x})$ is a dense-in-itself subspace of the Baire space for all ${x} \in {X}$.

  \item[\textup{(e)}] If $\,{\bf {V}}\!$ is a $\pi$-net Souslin scheme on ${X}$, then ${\bf {V}}^{{g}}$ is a $\pi$-net Souslin scheme on ${X}$.
 \end{itemize}
\end{lemm}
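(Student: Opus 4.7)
The plan is that each part follows by unfolding the definition $V^g_a := V_{g\circ a}$ and invoking Lemma~\ref{lemm_perm} together, where needed, with surjectivity of $g$. So I would handle (a)--(c) as quick checks and focus the work on (d) and (e).

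For (a), I would note that $V^g_{\langle\rangle} = V_{\langle\rangle} = X$, and use Lemma~\ref{lemm_perm}(b) to rewrite $\bigcup_n V^g_{a\hspace{0.5pt}^{\frown}n} = \bigcup_n V_{g\circ(a\hspace{0.5pt}^{\frown}n)} = \bigcup_n V_{(g\circ a)\hspace{0.5pt}^{\frown}n} = V_{g\circ a} = V^g_a$. For (b), Lemma~\ref{lemm_perm}(c) immediately gives $\fruit{q}{\mathbf{V}^g} = \bigcap_n V_{g\circ(q\upharpoonright n)} = \bigcap_n V_{(g\circ q)\upharpoonright n} = \fruit{g\circ q}{\mathbf{V}}\neq\varnothing$. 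Part (c) is trivial: $g\circ a\in\btree$, so $V^g_a=V_{g\circ a}$ is open in $X$.

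For (d), I would apply Lemma~\ref{blabla} to $\mathbf{V}^g$. Suppose $x\in V^g_a = V_{g\circ a}$. Since $\mathbf{V}$ covers $X$, we have $V_{g\circ a}=\bigcup_{k\in\omega}V_{(g\circ a)\hspace{0.5pt}^{\frown}k}$, so there is some $k$ with $x\in V_{(g\circ a)\hspace{0.5pt}^{\frown}k}$. By hypothesis $|g^{-1}(k)|\geq 2$, pick distinct $n,m\in g^{-1}(k)$. Then $g\circ(a\hspace{0.5pt}^{\frown}n) = (g\circ a)\hspace{0.5pt}^{\frown}g(n) = (g\circ a)\hspace{0.5pt}^{\frown}k$, so $V^g_{a\hspace{0.5pt}^{\frown}n}=V_{(g\circ a)\hspace{0.5pt}^{\frown}k}\ni x$ and likewise for $m$. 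Since (a) gives $X\subseteq\mathsf{flesh}(\mathbf{V}^g)$, Lemma~\ref{blabla} applies.

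For (e), I first observe $\mathsf{flesh}(\mathbf{V}^g)=\bigcup_a V_{g\circ a}\subseteq\mathsf{flesh}(\mathbf{V})\subseteq X$, and each $V^g_b=V_{g\circ b}$ is nonempty (since $\mathbf{V}$ is a $\pi$-net on $X$, all its entries are nonempty). Fix $a\in\btree$ and let $U$ be a nonempty open subset of $V^g_a=V_{g\circ a}$. Since $\mathbf{V}$ is a $\pi$-net, there is $c\sqsupseteq g\circ a$ with $V_c\subseteq U$; write $c=(g\circ a)\hspace{0.5pt}^{\frown}d$. The only real step is to lift $d$ through $g$: using surjectivity of $g$, choose, coordinatewise, $e\in\btree$ with $\lh(e)=\lh(d)$ and $g\circ e = d$. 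Setting $b:=a\hspace{0.5pt}^{\frown}e\sqsupseteq a$, we get $g\circ b=(g\circ a)\hspace{0.5pt}^{\frown}d = c$, hence $V^g_b=V_c\subseteq U$, as required.

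The only step that requires any care is (e), and there the main point is simply that surjectivity of $g$ on $\omega$ lifts to surjectivity of $c\mapsto g\circ c$ on $\btree$; the remaining parts are mechanical unfoldings of Lemma~\ref{lemm_perm}.
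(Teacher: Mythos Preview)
Your proposal is correct and follows essentially the same approach as the paper: parts (a)--(c) are handled via Lemma~\ref{lemm_perm} and the definition of $\mathbf{V}^g$, part (d) via Lemma~\ref{blabla}, and part (e) by reducing to the $\pi$-net property of $\mathbf{V}$ at the node $g\circ a$. The only cosmetic difference is in (e): the paper invokes Lemma~\ref{lemm_perm}(a) to identify the families $\{V^g_b:b\sqsupseteq a\}$ and $\{V_b:b\sqsupseteq g\circ a\}$ as equal sets, whereas you unfold this equality by hand via the coordinatewise lift $e$ of $d$ through $g$---but this is exactly the content of that lemma, so the arguments are the same.
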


\begin{proof}
 For (a), let ${a} \in \btree$. From (b) of Lemma \ref{lemm_perm} it follows that ${V}^{{g}}_{a} = {V}_{{g}\circ {a}} = \bigcup_{{n} \in \omega}{V}_{
 ({g}\circ {a})\hspace{0.5pt}^{\frown}{n}} = \bigcup_{{n} \in \omega}{V}_{{g}\circ ({a}\hspace{0.5pt}^{\frown}{n})} = \bigcup_{{n} \in \omega}{V}^{g}_{{a}\hspace{0.5pt}^{\frown}{n}}$.

 For (b), let ${q} \in \bset$. From (c) of Lemma \ref{lemm_perm} it follows that 
 $$
 \fruit{q}{{\bf V}^{g}} = \bigcap_{{n} \in \omega} {V}^{g}_{{q} {\upharpoonright} {n}} = \bigcap_{{n} \in \omega} {V}_{{g}\circ ({q} {\upharpoonright} {n})} = \bigcap_{{n} \in \omega} {V}_{({g}\circ {q}) {\upharpoonright} {n}} = 
 \fruit{{g}\circ{q}}{{\bf V}}\neq \emptyset.
 $$

 (c) is obvious. For (d), let ${x} \in {X}$. Using Lemma \ref{blabla} it is enough to show that for all ${a} \in \btree$, if ${V}^{g}_{a}\ni{x}$, then there are ${n} \neq {m} \in \omega$ such that ${V}^{g}_{{a}\hspace{0.5pt}^{\frown}{n}}\ni{x}$ and ${V}^{g}_{{a}\hspace{0.5pt}^{\frown}{m}}\ni{x}$. Take ${a} \in \btree$ such that ${V}^{g}_{a}\ni{x}$. By (a), ${\bf {V}}^{g}$  covers ${X}$, so there exists ${n} \in \omega$ such that ${x} \in {V}^{g}_{{a}\hspace{0.5pt}^{\frown}{n}}$. Take ${m} \in \omega \setminus \{{n}\}$ such that ${g}({m}) = {g}({n})$. Then ${x} \in {V}^{g}_{{a}\hspace{0.5pt}^{\frown}{n}} = {V}_{{g}\circ ({a}\hspace{0.5pt}^{\frown}{n})} = {V}_{{g}\circ ({a}\hspace{0.5pt}^{\frown}{m})} = {V}^{g}_{{a}\hspace{0.5pt}^{\frown}{m}}$.

 For (e), let ${a} \in \btree$. We must show that  $\{{V}^{g}_{b}:{b}\sqsupseteq{a}\}$ is a $\pi$-net for the subspace ${V}^{g}_{a}$ of ${X}$. From (a) of Lemma \ref{lemm_perm} it follows that 
 $$
 \{{V}^{g}_{b}:{b}\sqsupseteq{a}\}=
 \{{V}^{g}_{{a}\:{\hat{}}\:{c}}:{c}\in \btree\}=
 \{{V}_{{g}\circ ({a}\:{\hat{}}\:{c})}:{c}\in\btree\} =
 $$
 $$
 =\{{V}_{({g}\circ {a})\:{\hat{}}\:{c}}:{c}\in\btree\} = 
 \{{V}_{b}:{b}\sqsupseteq{g}\circ{a}\}.
 $$
 Since ${\bf {V}}$ is a $\pi$-net Souslin scheme on ${X}$, we see that $\{{V}_{b}:{b}\sqsupseteq{g}\circ{a}\}$ is a $\pi$-net for the subspace ${V}_{{g}\circ {a}}={V}^{g}_{a}$ of ${X}$.
\end{proof}

\begin{lemm} \label{fiber_scheme}
 Let ${\bf {V}}$ be a complete Souslin scheme that covers $\mathsf{flesh}({\bf {V}})$ and  $|\mathsf{flesh}({\bf {V}})| \leq 2^{\aleph_0}$. Suppose that  $\mathsf{branches}_{\bf {V}}({x})$ is a dense-in-itself subspace of the Baire space for all ${x} \in \mathsf{flesh}({\bf {V}})$. Then ${\bf {V}}$ has a selector.
\end{lemm}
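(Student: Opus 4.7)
The plan is to build the selector $f$ by transfinite recursion of length at most $2^{\aleph_0}$, enumerating in advance all demands coming from surjectivity and from density in each branch set.

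First, I would verify that for every $x \in \mathsf{flesh}({\bf V})$ the set $\mathsf{branches}_{\bf V}(x)$ is a nonempty perfect Polish subspace of $\bspace$. It is closed in $\bset$ because
$$
\mathsf{branches}_{\bf V}(x) = \bigcap_{n \in \omega}\{q \in \bset : x \in V_{q \uph n}\},
$$
and each of the inner sets is clopen in $\bspace$. It is nonempty by a direct recursion using $V_a = \bigcup_n V_{a\hspace{0.5pt}^\frown n}$ (pick $n_0$ with $x \in V_{\langle n_0 \rangle}$, then $n_1$ with $x \in V_{\langle n_0, n_1\rangle}$, and so on). Together with the hypothesis that it is dense-in-itself, this makes $\mathsf{branches}_{\bf V}(x)$ a nonempty perfect Polish space, so every nonempty open subset of it has cardinality $2^{\aleph_0}$. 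In particular, whenever $a \in \btree$ satisfies $S_a \cap \mathsf{branches}_{\bf V}(x) \neq \emptyset$, this intersection has cardinality $2^{\aleph_0}$.

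Next, I would enumerate
$$
T := \{(x,a) \in \mathsf{flesh}({\bf V}) \times \btree : S_a \cap \mathsf{branches}_{\bf V}(x) \neq \emptyset\}
$$
as $\langle (x_\gamma, a_\gamma) : \gamma < \kappa \rangle$, arranging to list every pair of the form $(x, \langle\rangle)$ (this will take care of surjectivity, since $S_{\langle\rangle} = \bset$ meets each nonempty $\mathsf{branches}_{\bf V}(x)$). Because $|T| \leq |\mathsf{flesh}({\bf V})| \cdot |\btree| \leq 2^{\aleph_0}$, the initial ordinal $\kappa$ is at most $2^{\aleph_0}$. By recursion on $\gamma < \kappa$, I choose
$$
q_\gamma \in (S_{a_\gamma} \cap \mathsf{branches}_{\bf V}(x_\gamma)) \setminus \{q_\delta : \delta < \gamma\},
$$
which is possible because $|\gamma| < 2^{\aleph_0}$ while the left-hand set has cardinality $2^{\aleph_0}$. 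Set $f(q_\gamma) := x_\gamma$; the membership $q_\gamma \in \mathsf{branches}_{\bf V}(x_\gamma)$ is exactly the requirement $x_\gamma \in \fruit{q_\gamma}{{\bf V}}$. For each remaining $q \in \bset$, the completeness of ${\bf V}$ allows me to pick any element of $\fruit{q}{{\bf V}}$ and declare it to be $f(q)$.

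Finally, I would verify $f$ is a selector. Surjectivity is immediate from the inclusion of pairs $(x, \langle\rangle)$; and $f^{-1}(x) \subseteq \mathsf{branches}_{\bf V}(x)$ by the way $f$ was defined. For density, it suffices to work with the basis $\{S_a : a \in \btree\}$: whenever $S_a \cap \mathsf{branches}_{\bf V}(x) \neq \emptyset$ the pair $(x,a)$ lies in $T$, is enumerated at some stage $\gamma$, and so produces $q_\gamma \in S_a \cap f^{-1}(x)$. The main obstacle is the cardinality bookkeeping in the recursion: one needs that at each stage $\gamma < \kappa \leq 2^{\aleph_0}$ the supply of admissible branches has not been exhausted, and this is exactly what the perfect-Polish structure of each $\mathsf{branches}_{\bf V}(x)$ (used in the first step) secures.
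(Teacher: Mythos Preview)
Your proof is correct and follows essentially the same strategy as the paper's: first show that every nonempty relatively open subset of $\mathsf{branches}_{\bf V}(x)$ has size $2^{\aleph_0}$ (closed in $\bspace$, hence Polish, and dense-in-itself), then run a transfinite recursion of length at most $2^{\aleph_0}$ to distribute branches among the points, and finally use completeness of ${\bf V}$ to define $f$ on the leftover branches. The only difference is bookkeeping: the paper recurses over $\mathsf{flesh}({\bf V})$, at each stage choosing an entire dense set $Q_x\subseteq\mathsf{branches}_{\bf V}(x)$ disjoint from the earlier $Q_y$'s, whereas you recurse over the pairs $(x,a)$ and pick one branch at a time; your sets $\{q_\gamma : x_\gamma=x\}$ are exactly the paper's $Q_x$.
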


\begin{proof}
 Note that, since $\mathbf{V}$ covers $\mathsf{flesh}({\bf {V}})$, $\mathsf{branches}_{\bf {V}}({x})$ is nonempty for all ${x} \in \mathsf{flesh}({\bf {V}})$.
 
 Note also that if ${x}\in\mathsf{flesh}({\bf {V}})$ and ${U}$ is nonempty open set in the subspace $\mathsf{branches}_{\bf {V}}({x})$ of the Baire space, then ${U}$ has cardinality $2^{\aleph_0}$. Indeed, $U$ is open in $\mathsf{branches}_{\bf {V}}({x})$ and $\mathsf{branches}_{\bf {V}}({x})$ is closed in $\bspace$, so ${U}$ is Polish. Also ${U}$ is dense-in-itself because $\mathsf{branches}_{\bf {V}}({x})$ is dense-in-itself. Therefore $|{U}|=2^{\aleph_0}$. 
 
 Now, by transfinite recursion on $\mathsf{flesh}({\bf {V}})$ well-ordered in the type of its cardinality, it is easy to build an indexed family $\langle {Q}_{x} \rangle_{{x} \in \mathsf{flesh}({\bf {V}})}$ such that
 \begin{itemize}
 \item[\ding{226}\,] ${Q}_{x}$ is a dense subset of $\mathsf{branches}_{\bf {V}}({x})$ for all ${x} \in \mathsf{flesh}({\bf {V}})$ and
  \item[\ding{226}\,] ${Q}_{x} \cap {Q}_{y} = \emptyset$ for all ${x} \neq {y} \in \mathsf{flesh}({\bf {V}})$.
 \end{itemize}

 Now we can construct a selector ${f}\colon\bset\to\mathsf{flesh}({\bf {V}})$ on ${\bf {V}}$. If ${p}
 \in{Q}_{x}$ for some ${x}\in\mathsf{flesh}({\bf {V}})$, then set ${f}({p})\coloneq{x}$.  If ${p}\nin\bigcup_{{x}\in\mathsf{flesh}({\bf {V}})}{Q}_{x}$, then choose ${f}({p})\in\fruit{p}{{\bf {V}}}$ arbitrarily. It is easy to see that ${f}$ is a selector on ${\bf {V}}$.
\end{proof}


 

\begin{lemm} \label{pibasesch_fibpibasesch}
 Let ${X}$ be a space and $|{X}| \leq 2^{\aleph_0}$. Suppose that there exists a complete $\pi$-base Souslin scheme on ${X}$ that covers ${X}$. Then there exists a complete $\pi$-base Souslin scheme on ${X}$ that covers ${X}$ and has a selector.
\end{lemm}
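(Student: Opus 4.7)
The plan is to reduce this directly to Lemma~\ref{fiber_scheme} by applying a suitable reindexing of the given scheme. Let $\mathbf{V} = \langle V_a \rangle_{a \in \btree}$ be a complete $\pi$-base Souslin scheme on $X$ that covers $X$, and choose a surjection $g \colon \omega \to \omega$ with $|g^{-1}(n)| \geq 2$ for every $n \in \omega$ (for instance, $g(n) = \lfloor n/2 \rfloor$). I propose to show that $\mathbf{V}^g$ is the desired scheme.

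First I would verify that $\mathbf{V}^g$ inherits every structural property we need. By Lemma~\ref{prop_of_Pfv}(a), (b), (c) and (e), the scheme $\mathbf{V}^g$ still covers $X$, is still complete, is still open on $X$, and is still a $\pi$-net Souslin scheme on $X$; in particular $\mathbf{V}^g$ is a complete $\pi$-base Souslin scheme on $X$ that covers $X$. Since $\mathbf{V}^g$ covers $X$, we have $V^g_{\langle\rangle} = X$, and thus $\mathsf{flesh}(\mathbf{V}^g) = X$, so $|\mathsf{flesh}(\mathbf{V}^g)| \leq 2^{\aleph_0}$ by hypothesis.

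Next I would use the doubling property of $g$. By Lemma~\ref{prop_of_Pfv}(d), since $\mathbf{V}$ covers $X$ and $|g^{-1}(n)| \geq 2$ for all $n \in \omega$, the set $\mathsf{branches}_{\mathbf{V}^g}(x)$ is a dense-in-itself subspace of the Baire space for every $x \in X = \mathsf{flesh}(\mathbf{V}^g)$. At this point every hypothesis of Lemma~\ref{fiber_scheme} is satisfied by $\mathbf{V}^g$, so that lemma furnishes a selector on $\mathbf{V}^g$, completing the proof.

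There is essentially no obstacle here: the whole development from the definition of $\mathbf{V}^g$ through Lemmas~\ref{prop_of_Pfv} and~\ref{fiber_scheme} has been engineered precisely for this application, and the only real choice is that of the surjection $g$, which merely has to collapse $\omega$ onto itself with every fiber of size at least two. The only minor thing to double-check is that $\mathsf{flesh}(\mathbf{V}^g) = X$, which is immediate once we know $\mathbf{V}^g$ covers $X$.
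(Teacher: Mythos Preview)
Your proposal is correct and follows exactly the same route as the paper: take the given scheme $\mathbf{V}$, reindex it via a surjection $g\colon\omega\to\omega$ with all fibers of size at least two, invoke Lemma~\ref{prop_of_Pfv} to see that $\mathbf{V}^{g}$ is again a complete $\pi$-base Souslin scheme on $X$ covering $X$ and satisfies the hypotheses of Lemma~\ref{fiber_scheme}, and then apply Lemma~\ref{fiber_scheme} to obtain a selector. Your write-up merely spells out a few steps (such as $\mathsf{flesh}(\mathbf{V}^{g})=X$) that the paper leaves implicit.
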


\begin{proof}
 Let ${\bf {V}}$ be a complete $\pi$-base Souslin scheme on ${X}$ that covers ${X}$. Consider a surjection ${g}\colon \omega \to \omega$ such that $|{g}^{-1}({n})| \geq 2$ for all ${n} \in \omega$. From Lemma \ref{prop_of_Pfv} it follows that ${\bf {V}}^{{g}}$ is a complete $\pi$-base Souslin scheme on ${X}$ that covers ${X}$, and that ${\bf {V}}^{{g}}$ meets the premisses of Lemma \ref{fiber_scheme}. Therefore ${\bf {V}}^{{g}}$ has a selector.
\end{proof}

\begin{lemm} \label{suff}
 Suppose that there exists a $\pi$-base Souslin scheme on a space ${X}$ that covers ${X}$ and has a selector. Then ${X}$ is a continuous open image of a $\pi$-space.
\end{lemm}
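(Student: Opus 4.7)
The plan is to apply the machinery already developed earlier in the paper directly to the hypothesized Souslin scheme. Let $\ll X,\tau\rr$ be the given space and let $\mathbf{V}=\ll V_a\rr_{a\in\btree}$ be a $\pi$-base Souslin scheme on $X$ that covers $X$ and admits a selector $f\colon\bset\to X$. The candidate preimage space is $\ll\bset,\sigma_{\tau,f}\rr$, constructed exactly as in the definition preceding Remark~\ref{rema.sigma.tau.f}.

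First I would observe that $\mathbf{V}$ is in particular a $\pi$-net Souslin scheme on $\ll X,\tau\rr$ (being a $\pi$-base by assumption), and that it covers $X$. Hence Lemma~\ref{sigma_tauV_pispace} applies verbatim and yields that $\ll\bset,\sigma_{\tau,f}\rr$ is a $\pi$-space. Next, since $\mathbf{V}$ is open on $\ll X,\tau\rr$, covers $X$, and carries the selector $f$, Lemma~\ref{fXV_is_open} applies and gives that $f\colon\ll\bset,\sigma_{\tau,f}\rr\to\ll X,\tau\rr$ is a continuous open surjection. Combining these two facts exhibits $X$ as a continuous open image of a $\pi$-space, which is exactly the conclusion.

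There is no real obstacle: the lemma is a clean packaging of Lemmas~\ref{fXV_is_open} and~\ref{sigma_tauV_pispace}, so the only thing to check is that the hypotheses of each of those lemmas are contained in the hypotheses of the present lemma. This is immediate because ``$\pi$-base Souslin scheme'' by definition means ``open $\pi$-net Souslin scheme'', so both the openness needed for Lemma~\ref{fXV_is_open} and the $\pi$-net property needed for Lemma~\ref{sigma_tauV_pispace} are supplied, while the covering hypothesis and existence of a selector are assumed outright.
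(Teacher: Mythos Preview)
Your proof is correct and follows exactly the same approach as the paper: the paper's proof simply states that the lemma follows from Lemmas~\ref{fXV_is_open} and~\ref{sigma_tauV_pispace}, and you have spelled out precisely how those two lemmas apply, correctly noting that a $\pi$-base Souslin scheme is by definition an open $\pi$-net Souslin scheme so that the hypotheses of both lemmas are met.
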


\begin{proof}
 This lemma follows from Lemmas \ref{fXV_is_open} and \ref{sigma_tauV_pispace}.
\end{proof}



\begin{lemm} \label{desc_op_im}
 A space ${X}$ is a continuous open image of a $\pi$-space if and only if $|{X}| \leq 2^{\aleph_0}$ and there exists a complete $\pi$-base Souslin scheme on ${X}$ that covers ${X}$.
\end{lemm}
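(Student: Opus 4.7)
The plan is to prove the two directions separately; the reverse direction is an immediate assembly of earlier results, while the forward direction carries the real content.

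For $(\Leftarrow)$, given $|X| \leq 2^{\aleph_0}$ and a complete $\pi$-base Souslin scheme on $X$ that covers $X$, Lemma~\ref{pibasesch_fibpibasesch} upgrades it to such a scheme that additionally has a selector, and then Lemma~\ref{suff} concludes that $X$ is a continuous open image of a $\pi$-space.

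For $(\Rightarrow)$, I would fix a continuous open surjection $f\colon Y \to X$ where $Y$ is a $\pi$-space, and let $\mathbf{V} = \langle V_a \rangle_{a\in\btree}$ be an open Souslin scheme witnessing this via Definition~\ref{def_of_pi_sp}: $\mathbf{V}$ partitions $Y$, has strict branches, and $\{V_a : a\in\btree\}$ is a $\pi$-base for $Y$. The cardinality bound $|X| \leq 2^{\aleph_0}$ comes from injecting $Y$ into $\bset$ by sending $y$ to the unique branch $q$ with $y \in V_{q\uph n}$ for every $n$ (existence by the partition, uniqueness by strict branches). The candidate scheme on $X$ is the pushforward $\mathbf{W} = \langle W_a \rangle_{a\in\btree}$ defined by $W_a := f[V_a]$. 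Openness of each $W_a$ is immediate from $f$ being open; the identities $W_{\langle\rangle} = X$ and $W_a = \bigcup_{n} W_{a\hspace{0.5pt}^{\frown}n}$ come by applying $f$ to those for $\mathbf{V}$; and completeness follows because, for each $q \in \bset$, the unique $y_q \in \fruit{q}{\mathbf{V}}$ satisfies $f(y_q) \in \bigcap_n f[V_{q\uph n}] = \fruit{q}{\mathbf{W}}$.

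The only nontrivial step, and the main obstacle, is verifying the $\pi$-net condition: for each $a \in \btree$ and each nonempty relatively open $U \subseteq W_a$, we must produce $b \sqsupseteq a$ with $W_b \subseteq U$. Writing $U = U' \cap W_a$ with $U'$ open in $X$, the set $f^{-1}[U'] \cap V_a$ is a nonempty open subset of $Y$, so the $\pi$-base property of $\{V_c : c\in\btree\}$ yields some $V_{b_0} \subseteq f^{-1}[U'] \cap V_a$. The crucial observation, combining strict branches with the partition property, is that any nonempty $V_{b_0} \subseteq V_a$ forces $b_0$ and $a$ to be $\sqsubseteq$-comparable, since a point of $V_{b_0}$ determines a single branch in $\bset$ that must pass through both indices. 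If $a \sqsubseteq b_0$ then $b := b_0$ works, because $W_{b_0} = f[V_{b_0}] \subseteq U' \cap W_a = U$. If instead $b_0 \sqsubseteq a$, then $V_a \subseteq V_{b_0} \subseteq f^{-1}[U']$, so $W_a \subseteq U' \cap W_a = U$ and $b := a$ suffices.
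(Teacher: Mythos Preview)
Your proof is correct and follows exactly the paper's approach: the reverse direction is the same chain through Lemmas~\ref{pibasesch_fibpibasesch} and~\ref{suff}, and the forward direction pushes the witnessing scheme forward along the open surjection, just as the paper does. The paper dismisses the verification that $\langle f[V_a]\rangle_{a\in\btree}$ is a complete $\pi$-base Souslin scheme covering $X$ as ``easy to see''; you have supplied the details, and in particular your comparability argument for the $\pi$-net condition is a clean way to fill the one step that is not entirely mechanical.
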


\begin{proof}
For $\Rightarrow$: Let ${f}$ be a continuous open surjection from a $\pi$-space ${Y}$ onto ${X}$. Let $\mathbf{V}=\ll {V}_{a} \rr_{{a} \in \btree}$ be an open Souslin scheme on $Y$ that partitions ${Y}$, has strict branches, and such that the family $\{{V}_{a} : {a} \in \btree\}$ is a $\pi$-base for ${Y}$. It is easy to see that $\ll {f}[{V}_{a}] \rr_{{a} \in \btree}$ is a complete  $\pi$-base Souslin scheme on ${X}$ that covers ${X}$.

For $\Leftarrow$: From Lemma \ref{pibasesch_fibpibasesch} it follows that there exists $\pi$-base Souslin scheme on ${X}$ that covers ${X}$ and has a selector. So ${X}$ is a continuous open image of a $\pi$-space by Lemma \ref{suff}.
\end{proof}

Let ${X}$ be a space and ${s}=\big\ll\ll{U}_{0},{V}_{0}\rr,\dots,\ll{U}_{n},{V}_{n}\rr\big\rr$ a sequence of moves in the Choquet game on ${X}$. For a  positive ${k}\leqslant{n}$, we call a pair $\ll{U}_{k},{V}_{k}\rr$ of the sequence ${s}$ \emph{redundant} iff ${V}_{k}={U}_{k}={V}_{{k}-1}$. 
Also we call the pair $\ll{U}_0,{V}_0\rr$ \emph{redundant} iff ${V}_{0}={U}_{0}={X}$. Let ${f}({s})$ be the sequence that is obtained by removing all redundant pairs from ${s}$. For example, if ${Z}\subset{Y}\subset{X}$, then
$$
{f}\big(\big\ll \ll{X},{X}\rr,\ll{X},{X}\rr,\ll{Y},{Y}\rr,\ll{Y},{Y}\rr,\ll{Z},{Z}\rr \big\rr\big) = \big\ll\ll{Y},{Y}\rr,\ll{Z},{Z}\rr\big\rr.
$$

Let $\Gamma$ be a strategy for player II in the Choquet game on a space ${X}$. The \emph{modification} $\Gamma'$ of the strategy $\Gamma$ prescribes player II to play as follows. Let  
$$
{s}=\big\ll\ll{U}_{0},{V}_{0}\rr,\dots,\ll{U}_{{n}},{V}_{{n}}\rr\big\rr
$$ 
be a sequence of previous moves. 
If in the $({n}{+}1\hspace{-1pt})\mathsurround=0pt$th move player I plays ${U}_{{n}+1}={V}_{{n}}$, then $\Gamma'({s},{U}_{{n}+1})\coloneq{V}_{n}$; that is, $\Gamma'$ tells player II to reply with the set ${V}_{{n}+1}\coloneq{V}_{n}$. 
If player I plays ${U}_{{n}+1}\neq{V}_{n}$, then $\Gamma'({s},{U}_{{n}+1})\coloneq\Gamma\big({f}({s}),{U}_{{n}+1}
\big)$. 
Also we set $\Gamma'(\ll\rr,{X})\coloneq{X}$ and $\Gamma'(\ll\rr,{U}_{0})\coloneq\Gamma(\ll\rr,{U}_{0})$ for  ${U}_{0}\neq{X}$.



\begin{lemm}\label{modified}
 If $\,\Gamma\!$ is a winning strategy for player II in the Choquet game on a space ${X}$, then its modification $\,\Gamma'\!$ is also winning.\hfill\qed
\end{lemm}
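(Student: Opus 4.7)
The plan is to show that any run of the Choquet game in which player~II follows $\Gamma'$ ``contains'' a run in which player~II follows $\Gamma$, obtained by deleting redundant pairs, and then exploit the fact that $\Gamma$ is winning. More precisely, I would start with an arbitrary run $r = \big\langle \langle U_n, V_n\rangle \big\rangle_{n\in\omega}$ of the Choquet game in which player~II obeys $\Gamma'$, and let $R\subseteq\omega$ be the set of indices $k$ for which $\langle U_k,V_k\rangle$ is redundant. Since a redundant pair with $k\geq 1$ satisfies $V_k=V_{k-1}$ (and if $0\in R$ then $V_0=X$), removing redundant pairs does not change the intersection of the $V_j$'s, so $\bigcap_{n\in\omega} V_n = \bigcap_{n\in\omega\setminus R} V_n$.

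From here the argument splits into two cases. If $\omega\setminus R$ is finite, then from some stage $N$ onward every pair is redundant, so $V_n=V_N$ for all $n\geq N$, and the intersection is the nonempty open set $V_N$. If $\omega\setminus R$ is infinite, enumerate it as $n_0<n_1<\cdots$ and form the subsequence $r' = \big\langle\langle U_{n_k},V_{n_k}\rangle\big\rangle_{k\in\omega}$. The plan is to verify that $r'$ is itself a legal run of the Choquet game on $X$ in which player~II follows $\Gamma$: legality because $U_{n_k}\subseteq V_{n_k-1}=V_{n_{k-1}}$ (the intervening $V_j$'s being constant, as those intermediate pairs are redundant), and the strategy condition because $f(s_{n_k-1})$ is, by construction, exactly the sequence of non-redundant pairs up to stage $n_k-1$, so the defining equation $\Gamma'(s_{n_k-1},U_{n_k})=\Gamma(f(s_{n_k-1}),U_{n_k})$ applies and gives $V_{n_k}=\Gamma(r'{\uph}k,U_{n_k})$. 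Since $\Gamma$ is winning, $\bigcap_k V_{n_k}\neq\varnothing$, which equals $\bigcap_n V_n$ by the first paragraph.

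The main obstacle — and the one subtle point I expect to have to spell out — is justifying that the defining equation of $\Gamma'$ actually triggers its ``delegating'' branch at each non-redundant stage, i.e., that $U_{n_k}\neq V_{n_k-1}$ whenever $n_k\notin R$ and $n_k\geq 1$ (and likewise $U_{n_0}\neq X$ if $n_0=0$). This is not quite free from the definition of redundancy alone; it uses the fact that player~II is following $\Gamma'$: if we had $U_{n_k}=V_{n_k-1}$, the first clause of the modification's definition would force $V_{n_k}=V_{n_k-1}=U_{n_k}$, making the pair redundant, a contradiction. Once this small observation is made, the rest of the verification is a straightforward bookkeeping exercise on the reduction operator $f$ and the monotonicity of the $V_n$'s.
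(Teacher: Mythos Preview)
Your argument is correct. The paper gives no proof at all for this lemma (it is marked with a bare \qed), so there is nothing to compare against; your write-up supplies exactly the routine bookkeeping one would expect, and you have correctly isolated the only point that is not entirely automatic, namely that when player~II follows $\Gamma'$ a stage is non-redundant precisely when $U_{k}\neq V_{k-1}$ (respectively $U_0\neq X$), so that the delegating clause of $\Gamma'$ fires at every surviving stage.
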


\begin{proof}[Proof of Theorem~\ref{bla_main_theorem}]
This theorem says that for every nonempty space ${X}$, the following are equivalent:
 \begin{itemize}
  \item[\textup{(a)}] ${X}$ is a continuous open image of a $\pi$-space.
  \item[\textup{(b)}] ${X}$ is a continuous open image of a space that can be mapped onto a Polish space by a continuous quasi-open bijection.
  \item[\textup{(c)}] ${X}$ is a Choquet space of countable $\pi$-weight and of cardinality not greater than continuum.
 \end{itemize}

(a)$\Rightarrow$(b) follows from Proposition \ref{another_desc}.

(b)$\Rightarrow$(c). Suppose that ${X}$ is a continuous open image of a space ${Y}$ that can be mapped onto a Polish space by a continuous quasi-open bijection. It is easy to show that ${Y}$ is a Choquet space of countable $\pi$-weight and of cardinality not greater than continuum. Then ${X}$ also possesses these properties.

(c)$\Rightarrow$(a).  Using Lemma \ref{desc_op_im}, it is enough to find a complete  $\pi$-base Souslin scheme on ${X}$ that covers ${X}$. 
Let $\Gamma$ be a winning strategy for player II in the Choquet game on ${X}$.

We will build Sousin schemes ${\bf {U}} = \ll {U}_{a} \rr_{{a} \in \btree}$ and ${\bf {V}} = \ll {V}_{a} \rr_{{a} \in \btree}$ such that
\begin{itemize}
 \item[\ding{226}\,]  ${\bf {V}}$ covers ${X}$;
 \item[\ding{226}\,]  ${V}_{a}$ is nonempty and open for all ${a} \in \btree$;
 \item[\ding{226}\,]  $\{{V}_{a^{\frown}{m}} : {m} \in \omega\}$ is a $\pi$-base for the subspace ${V}_{a}$ of ${X}$ for all ${a} \in \btree$;
 \item[\ding{226}\,]  for every branch ${p} \in \bset$, the sequence $\big\ll \ll{U}_{p\uph{0}}, {V}_{p\uph{0}}\rr, \ll{U}_{p\uph{1}}, {V}_{p\uph{1}}\rr, \dots \big\rr$ is a run of the Choquet game on ${X}$ in which player II plays according to the modified strategy $\Gamma'$.
\end{itemize}
Lemma~\ref{modified} says that $\Gamma'$ is a winning strategy for player II in the Choquet game on ${X}$, so the scheme ${\bf {V}}$ is  complete. Also it is easy to see that ${\bf {V}}$ is a $\pi$-base Souslin scheme on ${X}$ that covers ${X}$. 

To compete the proof, it remains to construct ${\bf {U}} = \ll {U}_{a} \rr_{{a} \in \btree}$ and ${\bf {V}} = \ll {V}_{a} \rr_{{a} \in \btree}$; we do it by recursion on $\lh({a})$. 
Put ${U}_{\ll \rr} \coloneq {X}$ and
${V}_{\ll \rr} := \Gamma'({\ll\rr},{U}_{\ll \rr})$. Note that 
${V}_{\ll \rr}={X}$. 

Suppose that ${U}_{a}$ and ${V}_{a}$ have been chosen; we will chose ${U}_{a^{\frown}{m}}$ and ${V}_{a^{\frown}{m}}$ for all ${m}\in\omega$. 
Consider a sequence $\ll {W}_{m} \rr_{{m} \in \omega}$ such that  ${W}_{0} = {V}_{a}$ and $\{{W}_{m} : {m} \in \omega\}$ is a $\pi$-base for the subspace ${V}_{a}$ of ${X}$. For all ${m}\in\omega$, we put ${U}_{a^{\frown}{m}} \coloneq {W}_{m}$ and 
$$
{V}_{a^{\frown}{m}} := \Gamma'\big(\big\ll\ll{U}_{{a}\uph{0}}, {V}_{{a}\uph{0}}\rr,\ll{U}_{{a}\uph{1}},{V}_{{a}\uph{1}}\rr, \dots, \ll{U}_{{a}},{V}_{{a}}\rr \big\rr,{U}_{a^{\frown}{m}}\big).
$$
\end{proof}


\begin{thebibliography}{99}
\bibitem{topenc}Hart, K., Nagata, J., Vaughan, J. Encyclopedia of general topology. (Elsevier, 2003)
\bibitem{kunen2014set}Kunen, K. Set theory. (College Publications, London, 2011)
\bibitem{patrakeev2015metrizable}Patrakeev, M. Metrizable images of the Sorgenfrey line. {\em Topology Proceedings}. \textbf{45} pp. 253-269 (2015)
\bibitem{kechris2012classical}Kechris, A. Classical descriptive set theory: Graduate texts in mathematics. (Springer, 1995)
\bibitem{patrakeev2019property}Patrakeev, M. When the property of having a $\pi\nos$-tree is preserved by products. {\em Topology Proceedings}. \textbf{53} pp. 73-95 (2019)

\end{thebibliography}

\end{document}